\newtheorem{prop}{Proposition}
\newtheorem{defn}{Definition}
\newcommand{\bea}{\begin{eqnarray}}
\newcommand{\eea}{\end{eqnarray}}
\newcommand{\be}{\begin{eqnarray*}}
\newcommand{\ee}{\end{eqnarray*}}
\theoremstyle{definition}
\begin{document}

\begin{frontmatter}

\title{An irregularly spaced first-order moving average model}


\runtitle{Irregularly spaced MA(1) model}

 \author{C. Ojeda\textsuperscript{1}, \fnms{W.} \snm{Palma}\textsuperscript{2}, S. Eyheramendy\textsuperscript{2,3}, F. Elorrieta\textsuperscript{2,4}}
\vspace{0.3cm}
\address{\textsuperscript{1} Department of Statistics, Universidad del Valle, Cali, Colombia}
\address{\textsuperscript{2} Millennium Institute of Astrophysics,  Santiago, Chile}
\address{\textsuperscript{3} Faculty of Engineering and Sciences, Universidad Adolfo Iba\~nez, Santiago,  Chile}
\address{\textsuperscript{4} Department of Mathematics and Computer Science, Universidad de Santiago, Chile}

\vspace{0.1cm}

\runauthor{Ojeda, Palma, Eyheramendy, Elorrieta}

\begin{abstract}
A novel first-order moving-average model for analyzing time series observed at irregularly spaced intervals is introduced. Two definitions are presented, which are equivalent under Gaussianity. The first one relies on normally  distributed data and the specification of second-order moments. The second definition provided is more flexible in the sense that it allows for considering other distributional assumptions. The statistical properties are investigated along with the one-step linear predictors and their mean squared errors. It is established that the process is strictly stationary under  normality and weakly stationary in the general case.  Maximum likelihood and bootstrap estimation procedures are discussed and the finite-sample behavior of these estimates is assessed through Monte Carlo experiments. In these simulations, both methods perform well in terms of estimation bias and standard errors, even with relatively small sample sizes.  Moreover, we show that for non-Gaussian data, for t-Student and Generalized errors distributions, the parameters of the model can be estimated precisely by maximum likelihood. The proposed IMA model is compared to the continuous autoregressive moving average (CARMA) models, exhibiting good performance. Finally, the practical application and usefulness of the proposed model are illustrated with two real-life data examples.
\end{abstract}

\begin{keyword}
\kwd{Irregularly sampled}
\kwd{stationarity}
\kwd{moving average}
\kwd{bootstrapping}
\kwd{non-Gaussian data}
\kwd{time series analysis}
\end{keyword}

\end{frontmatter}

\section{Introduction\label{sec:Introduction}}
Time series data observed at irregularly spaced intervals arise often in fields as diverse as astronomy, climatology, economics, finance, medical sciences, geophysics;  see for example \citet{moore1987experiences},\citet{Munoz1992}, \citet{Belcher1994}, \citet{kim2008fitting},\citet{Hand}, \citet{CORDUAS20081860}, \citet{HARVILL2013113}, \citet{Podgorski}, \citet{Babu}, \citet{eyheramendy2018irregular},  \citet{miller2019testing}, \citet{Edelmann}, \citet{caceres2019autoregressive}, \citet{refId0}, and \citet{ZHANG2020107019}, among others. In particular, in the climatology context, \citet{Mudelsee2014} points out that conventional time series analysis has largely ignored irregularly spaced structures that climate time series must  take into account. The statistical analysis of these data poses several difficulties. First, the overwhelming majority of the available time series methods assume regularly observed data, \citep[see, e.g.,][]{Brockwell1991,Hamilton1994,Box2016}. Second, when this assumption is dropped,  several technical problems arise including the issue of  formulating appropriate methodologies for carrying out  statistical inferences. Third, most of the currently available numerical algorithms for computing estimators and forecasts are based on the regularity of the data collection process. 

Irregularly spaced time series may occur in many ways. For example, data can be regularly spaced with missing observations. On the other hand, data can be truly irregularly spaced with no underlying regular sampling interval. Moreover,  a time series may exhibit both irregularly spaced measurements and missing values. This may occur, for example in astronomy where the measurement schedule may be irregularly spaced and the scheduled observation cannot be made at the specified date due to cloudiness.

Techniques considering time series in the presence of missing data are useful when data are regularly spaced with missing observations \citep[see, e.g.,][]{Parzen1963,Dunsmuir1983,Reinsel1987}. Nevertheless, these techniques cannot be applied if data are truly irregularly spaced. This case usually has been treated through two approaches. First, the irregularly spaced time series is transformed into regularly spaced  through interpolation and consequently conventional techniques are applied. \citet{Adorf1995} provides a summary of such transformations, which are frequently used to analyze astronomical data. However, interpolation methods typically produce bias (for instance, over-smoothing), changing the dynamic of the process. Second, irregularly spaced time series can be treated as discrete realizations of a continuous stochastic process \citep[see, e.g.,][]{Robinson1977,Parzen1984,Thornton2013}, but continuous time series models may present embedding and aliasing problems, see for example \citet{brockwell1999class}, \citet{huzii2007embedding} and \cite{Tomasson2015}. Consequently,  this paper proposes a novel approach  that allows for the  treatment of  first order moving averages (MA) structures with irregularly spaced times, without assuming a continuous-time or a regularly sampled underlying process. 

For illustration purposes, the proposed IMA model is compared to the continuous autoregressive moving average (CARMA) models. Observe that direct comparison is not possible due to that for a CARMA($p$,$q$) model with autoregressive order $p$ and moving average order $q$, it is required that $p>q$. Consequently, the IMA process is not a particular case of the CARMA class of models \citep[see,][]{Phadke1974,Thornton2013}. Thus, the IMA model is compared to  CARMA$(2,1)$ processes. Note that in the latter case, the model will have two additional parameters as compared to the IMA model. In these model comparisons, the IMA exhibits a better performance, assessed in terms of log-likelihood, AIC or Mean Squared Error  (MSE) criteria.

The remainder of the paper is organized as follows. Section~\ref{sec:IMA} defines a  class of stochastic processes called irregularly spaced first-order MA model for the treatment and analysis of unevenly spaced time series. Two definitions are presented, which under Gaussianity are equivalent. The first one relies on normally distributed data and the specification of second-order moments. The second one is more flexible in the sense that it allows to consider other distributional assumptions. This section also provides a state-space representation of the model along with one-step linear predictors and their mean squared errors. The maximum likelihood and bootstrap estimation methods are introduced in Section~\ref{sec:IMA-MLE}. The finite-sample behaviors of the proposed estimators are studied via Monte Carlo in Section~\ref{sec:IMA-MonteCarlo}, for Gaussian and non-Gaussian errors distributions. Two real-life data applications are discussed in Section \ref{sec:IMA-Application} while  conclusions are presented in Section~\ref{conclusions}. Finally, some technical results are provided in the Appendix section. 

\section{Definitions\label{sec:IMA}}

Let $\mathbb{T}=\{t_{1},t_{2},t_{3},\ldots\}$ be a set of observation times such that its consecutive differences $\Delta_{n+1}=t_{n+1}-t_{n}$, for $n\geq1$, are uniformly bounded and bounded away from zero. Thus, there are $\Delta_{L}>0$ and $\Delta_{U}<\infty$ such that $\Delta_{L}\leq\Delta_{n+1}\leq\Delta_{U}$ for all $n$. Without loss of generality, it is assumed that  $\Delta_{L}=1$. Otherwise, each $t_{n}$ can be re-scaled by $\Delta_{L}$. These conditions are compatibles with any physical measurement and determine $\mathbb{T}$ as a discrete, and therefore countable, subset of $\mathbb{R}$. Let  $\{x\}=\{X_{\tau},\tau\in\mathbb{T}\}$ be a discrete irregularly-spaced stochastic process and assume that the pattern of irregular spacing is independent to the stochastic properties of the process. When $\mathbb{T}$ is an arithmetic progression with $\Delta_{n+1}=\Delta$ for $n\geq1$, the process $\{x\}$ is called a discrete regularly-spaced stochastic process and no relevant information is lost by consider $\mathbb{T}=\mathbb{N}^{+}$. Henceforth, it is only refer to discrete stochastic processes or discrete time series. An irregularly (or unequally or unevenly) spaced time series is a finite realization of an irregularly spaced stochastic process. 

\subsection{An irregularly spaced first-order MA process of general form}

According to \citet[Proposition 2.4.2]{Azencott1986}, if  $m:\mathbb{T}\rightarrow\mathbb{R}$ is an arbitrary function and  $\Gamma:\mathbb{T}\times\mathbb{T}\rightarrow\mathbb{R}$ satisfying the following two conditions: 1)  $\Gamma(\tau,\iota)=\Gamma(\iota,\tau)$ for any $\tau,\iota\in\mathbb{T}$, and 2) the matrix $[\Gamma(\tau_{i},\tau_{j})]$ is non-negative definite for any finite set  $\{\tau_{1},\tau_{2},\ldots,\tau_{n}\}$ in  $\mathbb{T}$, then there is a real-valued Gaussian process $\{x\}$ for which its mean is $m$ and its variance-covariance matrix is $\Gamma$. With this result in mind,  the following stochastic process is considered. Let $m:\mathbb{T}\rightarrow\mathbb{R}$ be a function such that $m(t_{n})=0$, for any $t_{n}\in\mathbb{T}$. Further, let $\Gamma:\mathbb{T}\times\mathbb{T}\rightarrow\mathbb{R}$ be a function such that, for any $t_{n},t_{s}\in\mathbb{T}$,
\[
\Gamma(t_{n},t_{s})=\begin{cases}
\gamma_{0}, & \left|n-s\right|=0,\\
\gamma_{1,\Delta_{\max\{n,s\}}}, & \left|n-s\right|=1,\\
0, & \left|n-s\right|\geq2.
\end{cases}
\]
Note that $\Gamma$ can be represented by an infinite symmetric real-valued tridiagonal matrix as
\begin{equation}
\boldsymbol{\Gamma}=\begin{bmatrix}\gamma_{0} & \gamma_{1,\Delta_{2}} & 0 & 0 & \cdots\\
\gamma_{1,\Delta_{2}} & \gamma_{0} & \gamma_{1,\Delta_{3}} & 0\\
0 & \gamma_{1,\Delta_{3}} & \gamma_{0} & \gamma_{1,\Delta_{4}}\\
0 & 0 & \gamma_{1,\Delta_{4}} & \gamma_{0}\\
\vdots &  &  &  & \ddots
\end{bmatrix}.\label{eq:InfGenGamma}
\end{equation}

Now, let $\boldsymbol{\Gamma}_{n}$ be the $n\times n$ truncation of $\boldsymbol{\Gamma}$. From \citet[Theorem 3.2]{Ismail1991}, $\boldsymbol{\Gamma}_{n}$ is positive definite (assuming $\gamma_{1,\Delta_{j}}\neq0$) if $\gamma_{0}>0$ and $(\nicefrac{\gamma_{1,\Delta_{j}}}{\gamma_{0}})^{2}\leq\nicefrac{1}{4}$ for $j=2,\ldots,n$. Thus, for $\boldsymbol{\Gamma}$, if
\begin{equation}
\gamma_{0}>0\quad\textrm{and}\quad\left(\frac{\gamma_{1,\Delta_{n+1}}}{\gamma_{0}}\right)^{2}\leq\nicefrac{1}{4},\;\textrm{for}\;n\geq1,\;\textrm{with}\;\gamma_{1,\Delta_{n+1}}\neq0,\label{eq:ProcessConditions}
\end{equation}
then there is a real-valued Gaussian process $\{x\}$, with mean $0$ and variance-covariance matrix $\Gamma$. This process is called an irregularly spaced first-order MA process of general form. Specific expressions for $\gamma_{0}$ and $\gamma_{1,\Delta_{n+1}}$ satisfying \eqref{eq:ProcessConditions} are given in the next section. The goal is to define  a stationary irregularly spaced stochastic process for which  the conventional first-order MA process is obtained when $\Delta_{n+1}=1$ for all $n\geq1$.

\subsection{An irregularly spaced first-order MA model}

A class of {\em irregularly spaced first-order moving average} (IMA) processes is defined next. Observe that the definition is based on either a distributional or a constructionist viewpoint being this second more general.

In \eqref{eq:InfGenGamma}, $\gamma_{0}$ and $\gamma_{1,\Delta_{n+1}}$, for $n\geq1$, represent the variance and the first-order auto-covariances, respectively. In order to satisfy \eqref{eq:ProcessConditions},  the variance is defined as $\gamma_{0}=\sigma^{2}(1+\theta^{2})$ and the first-order auto-covariances as $\gamma_{1,\Delta_{n+1}}=\sigma^{2}\theta^{\Delta_{n+1}}$, where $\sigma^{2}>0$ and $0\leq\theta<1$. Hence,  a particular stationary irregularly spaced stochastic process with variance-covariance matrix given by
\begin{equation}
\boldsymbol{\Gamma}=\sigma^{2}\begin{bmatrix}1+\theta^{2} & \theta^{\Delta_{2}} & 0 & 0 & \cdots\\
\theta^{\Delta_{2}} & 1+\theta^{2} & \theta^{\Delta_{3}} & 0\\
0 & \theta^{\Delta_{3}} & 1+\theta^{2} & \theta^{\Delta_{4}}\\
0 & 0 & \theta^{\Delta_{4}} & 1+\theta^{2}\\
\vdots &  &  &  & \ddots
\end{bmatrix}\label{eq:InfIMAGamma}
\end{equation}
is obtained, which contains the conventional first-order MA process as a special case when $\Delta_{n+1}=1$ for all $n\geq1$. This Gaussian process is an irregularly spaced first-order MA process.

\begin{defn}[IMA--distributional viewpoint]
\label{def:-IMA-Distributional} Let $\mathbb{T}=\{t_{1},t_{2},t_{3},\ldots\}$ be a set such that its consecutive differences $\Delta_{n+1}=t_{n+1}-t_{n}$, for $n\geq1$, are uniformly bounded and bounded away from zero. The IMA process $\{X_{t_{n}},t_{n}\in\mathbb{T}\}$ is defined as a Gaussian process with mean $\boldsymbol{0}$ and covariance \eqref{eq:InfIMAGamma} with $\sigma^{2}>0$ and $0\leq\theta<1$. It is said that $\{X_{t_{n}},t_{n}\in\mathbb{T}\}$ is an IMA process with mean $\mu$ if $\{X_{t_{n}}-\mu,t_{n}\in\mathbb{T}\}$ is an IMA process.
\end{defn}

Since the definition above assumes Gaussian distribution, which only relies on the specification of moments up to second-order,  an alternative and more flexible definition of an IMA process is considered next. This approach is known as a constructionist viewpoint of the process \citep{Spanos1999}. Appendix \ref{sec:Constructionist-viewpoint} contains details about how  this model is built.

\begin{defn}[IMA--constructionist viewpoint]
\label{def:IMA-Constructionist}Let $\{\varepsilon_{t_{n}}\}_{n\geq1}$ be a sequence of uncorrelated random variables with mean 0 and variance $\sigma^{2}c_{n}(\theta)$. Here, $\sigma^{2}>0$, $0\leq\theta<1$, $c_{1}(\theta)=1+\theta^{2}$ and
\[
c_{n}(\theta)=1+\theta^{2}-\frac{\theta^{2\Delta_{n}}}{c_{n-1}(\theta)}\quad\text{for}\quad n\geq2,
\]
where $\Delta_{n}=t_{n}-t_{n-1}$. The process $\{X_{t_{n}},t_{n}\in\mathbb{T}\}$, with $\mathbb{T}$ as defined in Definition \ref{def:-IMA-Distributional}, is said to be an IMA process if $X_{t_{1}}=\varepsilon_{t_{1}}$ and, for $n\geq2$,
\begin{equation}
X_{t_{n}}=\varepsilon_{t_{n}}+\frac{\theta^{\Delta_{n}}}{c_{n-1}(\theta)}\varepsilon_{t_{n-1}}.\label{eq:IMA}
\end{equation}
It is said  that $\{X_{t_{n}},t_{n}\in\mathbb{T}\}$ is an IMA process with mean $\mu$ if $\{X_{t_{n}}-\mu,t_{n}\in\mathbb{T}\}$ is an IMA process.
\end{defn}

Observe that under Gaussianity, the definition of the IMA model provided by the constructionist viewpoint is equivalent to the definition given by distributional viewpoint  since in this case \eqref{eq:IMA} is a Gaussian process with variance $\gamma_{0}=\sigma^{2}(1+\theta^{2})$ and first-order auto-covariances $\gamma_{1,\Delta_{n+1}}=\sigma^{2}\theta^{\Delta_{n+1}}$, for $n\geq1$ (see Appendix \ref{sec:Constructionist-viewpoint} for more details). Additionally, it is interesting to note that the sequence $c_{n}(\theta)$ is known as a general backward continued fraction \citep{Kilic2008}, which with $\Delta_{n}\geq1$, for all $n$, satisfies (proof in Appendix \ref{sec:On-gbcf-IMA})
\[
1\leq c_{n}(\theta)<2\quad\text{for}\;n\geq1.
\]

\subsection{Properties}

If $\boldsymbol{X}_{n}=[X_{t_{1}},\ldots,X_{t_{n}}]^{\prime}$ is a random vector from an IMA process, then $\boldsymbol{X}_{n}$ is a Gaussian random vector with mean $\boldsymbol{\textrm{m}}_{n}=\boldsymbol{0}$ and tridiagonal covariance matrix
\begin{equation}
\boldsymbol{\Gamma}_{n}=\sigma^{2}\begin{bmatrix}1+\theta^{2} & \theta^{\Delta_{2}} & \cdots & 0 & 0\\
\theta^{\Delta_{2}} & 1+\theta^{2} & \cdots & 0 & 0\\
\vdots & \vdots & \ddots & \vdots & \vdots\\
0 & 0 & \cdots & 1+\theta^{2} & \theta^{\Delta_{n}}\\
0 & 0 & \cdots & \theta^{\Delta_{n}} & 1+\theta^{2}
\end{bmatrix}.\label{eq:Gamman}
\end{equation}
Thus, the IMA process is a weakly stationary Gaussian process and therefore strictly stationary. On the other hand, under Definition 2, the process has constant mean and its covariance structure depends only on the time differences $\Delta_n$. Thus, the process is weakly stationary.


\subsection{State-space representation\label{sec:IMA-StateSpace}}

Using the same notation given in Definition \ref{def:IMA-Constructionist},  a state-space representation of the model \eqref{eq:IMA} can be obtained. This representation has the minimal dimension of the state vector and
is given by
\begin{align*}
\alpha_{t_{n+1}} & =\frac{\theta^{\Delta_{n+1}}}{c_{n}(\theta)}\varepsilon_{t_{n}},\\
X_{t_{n}} & =\alpha_{t_{n}}+\varepsilon_{t_{n}}
\end{align*}
for $n\geq1$ with $\alpha_{t_{1}}=0$. Note that, in this representation, the transition and measurement equation disturbances are correlated. As suggested by \citet{Harvey1989}, in order to get a new system on which these disturbances are uncorrelated, the following expressions are used,
\begin{align}\label{eq:IMA-TranEqUncorrelated}
\alpha_{t_{n+1}} & =-\frac{\theta^{\Delta_{n+1}}}{c_{n}(\theta)}\alpha_{t_{n}}+\frac{\theta^{\Delta_{n+1}}}{c_{n}(\theta)}X_{t_{n}},\\
X_{t_{n}} & =\alpha_{t_{n}}+\varepsilon_{t_{n}}.\notag
\end{align}
The inclusion of $X_{t_{n}}$ in \eqref{eq:IMA-TranEqUncorrelated} does not affect the Kalman filter, as $X_{t_{n}}$ is known at time $t_{n}$.

\subsection{Prediction\label{sec:IMA-Prediction}}

The one-step linear predictors are defined as $\hat{X}_{t_{1}}=0$
and
\[
\hat{X}_{t_{n+1}}=\phi_{n1}X_{t_{1}}+\cdots+\phi_{nn}X_{t_{n}},\;n\geq1,
\]
where $\phi_{n1},\ldots,\phi_{nn}$ satisfy the prediction equations
\begin{equation}
\boldsymbol{\Gamma}_{n}\boldsymbol{\phi}_{n}=\boldsymbol{\gamma}_{n}.\label{eq:OneStepPredictionEquations}
\end{equation}

In terms of the IMA process the matrix  $\boldsymbol{\Gamma}_{n}$ is given by  \eqref{eq:Gamman} and,
{\footnotesize{}
\[
\boldsymbol{\phi}_{n}=\begin{bmatrix}\phi_{n1}\\
\phi_{n2}\\
\vdots\\
\phi_{nn}
\end{bmatrix}\,\textrm{and}\,\boldsymbol{\gamma}_{n}=\sigma^{2}\begin{bmatrix}0\\
0\\
\vdots\\
\theta^{\Delta_{n+1}}
\end{bmatrix}.
\]
}

The {\em mean squared errors} (MSE) are $\nu_{n+1}=\textrm{E}[(X_{t_{n+1}}-\hat{X}_{t_{n+1}})^{2}]=\gamma_{0}-\boldsymbol{\gamma}_{n}^{\prime}\boldsymbol{\Gamma}_{n}^{-1}\boldsymbol{\gamma}_{n}$ with $\nu_{1}=\gamma_{0}$. From \citet[Proposition 5.1.1]{Brockwell1991},
there is exactly one solution of \eqref{eq:OneStepPredictionEquations} which is given by
\[
\boldsymbol{\phi}_{n}=\boldsymbol{\Gamma}_{n}^{-1}\boldsymbol{\gamma}_{n}.
\]

A useful technique for solving the prediction equations is the innovations algorithm \citep{Brockwell1991}. This procedure  gives the coefficients of $X_{t_{n}}-\hat{X}_{t_{n}},\ldots,X_{t_{1}}-\hat{X}_{t_{1}}$, in the alternative expansion $\hat{X}_{t_{n+1}}=\sum_{j=1}^{n}\theta_{nj}(X_{t_{n+1-j}}-\hat{X}_{t_{n+1-j}})$. Hence, by using this algorithm (see Appendix \ref{sec:Innovations-algorithm}) the following expressions are obtained:
\begin{align*}
\theta_{n,j} & =0,\quad2\leq j\leq n,\\
\theta_{n,1} & =\frac{\gamma_{1,\Delta_{n+1}}}{\upsilon_{n}}\quad\textrm{and}\\
\upsilon_{n+1} & =\gamma_{0}-\theta_{n,1}^{2}\upsilon_{n}=\gamma_{0}-\frac{\gamma_{1,\Delta_{n+1}}^{2}}{\upsilon_{n}},\quad\textrm{with}\;\upsilon_{1}=\gamma_{0}.
\end{align*}

Specifically, for the IMA process,  $\hat{X}_{t_{1}}(\theta)=0$ with mean squared error $\sigma^{2}c_{1}(\theta)$ and
\[
\hat{X}_{t_{n+1}}(\theta)=\frac{\theta^{\Delta_{n+1}}}{c_{n}(\theta)}(X_{t_{n}}-\hat{X}_{t_{n}}(\theta)),\;n\geq1,
\]
with mean squared errors $\sigma^{2}c_{n+1}(\theta)$.

\section{Estimation\label{sec:IMA-MLE}}

Let $X_{t}$ be observed at points $t_{1},\ldots,t_{\textrm{N}}$. The log likelihood is then
\[
-\frac{1}{2}\textrm{N}\log2\pi\sigma^{2}-\frac{1}{2}\sum_{n=2}^{\textrm{N}}\log c_{n}(\theta)-\frac{1}{2}\sum_{n=2}^{\textrm{N}}\frac{(X_{t_{n}}-\hat{X}_{t_{n}}(\theta))^{2}}{\sigma^{2}c_{n}(\theta)},
\]
where $\theta$ and $\sigma^{2}$ are any admissible parameter values. Now,  differentiating with respect to $\sigma^{2}$ and replacing the expression with its estimator,  the reduced likelihood \citep{Brockwell1991} is obtained:
\begin{align*}
    q_{\textrm{N}}(\theta) & =\log\hat{\sigma}_{\textrm{N}}^{2}(\theta)+\frac{1}{\textrm{N}}\sum_{n=2}^{\textrm{N}}\log c_{n}(\theta),\\
    \hat{\sigma}_{\textrm{N}}^{2}(\theta) & =\frac{1}{\textrm{N}}\sum_{n=2}^{\textrm{N}}\frac{(X_{t_{n}}-\hat{X}_{t_{n}}(\theta))^{2}}{c_{n}\left(\theta\right)}.
\end{align*}
The Maximum Likelihood (ML) estimate of $\theta$, $\hat{\theta}_{\textrm{N}}$, is the value minimizing $q_{\textrm{N}}(\theta)$, and the estimate of $\sigma^{2}$ is $\hat{\sigma}_{\textrm{N}}^{2}=\hat{\sigma}_{\textrm{N}}^{2}(\hat{\theta}_{\textrm{N}})$. The optimization can be done through the method proposed by \citet{Byrd1995}, which allows general box constraints. Specifically, $q_{\textrm{N}}(\theta)$ can be minimized under the constraint $0\leq\theta<1$. Also, this method allows for finding the numerically differentiated Hessian matrix at the solution given. Solving it,  the estimated standard errors can be obtained.

\subsection{Bootstrap\label{sec:IMA-Bootstrap}}

In order to carry out statistical inference it is necessary  to derive the distributions of the statistics used for the estimation of the parameters from the data, but these calculations are often cumbersome. Additionally,  if $N$ is small, or if the parameters are close to the boundaries, the asymptotic approximations can be quite poor \citep{Shumway2017}. Also, in the irregularly spaced time case, the asymptotic approximations need to establish strong conditions such as in \citet[Section 4, pp.12]{Robinson1977} which are difficult to meet. To overcome these difficulties and to get approximations of the finite sample distributions,  the bootstrap method can be used. The usual procedure for applying the bootstrap technique in the context of  time series is fitting a suitable model to the data, to obtain the residuals from the fitted model, and then generate a new series by incorporating random samples from the residuals into the fitted model. The residuals are typically centered to have the same mean as the innovations of the model.

Following  \citet{Bose1990}, for $n\geq2$, define the estimated innovations or one-step prediction residuals, as
\[
e_{t_{n}}=X_{t_{n}}+\sum_{j=1}^{n-1}(-1)^{j}\frac{\prod_{k=n-j+1}^{n}\theta^{\Delta_{k}}}{\prod_{l=n-j}^{n-1}c_{l}(\theta)}X_{t_{n-j}}
\]
and $e_{t_{1}}=X_{t_{1}}$. Using the structure of the process and assuming that the fitted model is, in fact, the true model for the data, the residuals are given by,
\[
e_{t_{n}}=\varepsilon_{t_{n}}+(-1)^{n-2}\frac{\prod_{k=2}^{n}\theta^{\Delta_{k}}}{\prod_{l=1}^{n-1}c_{l}(\theta)}\varepsilon_{t_{1}}.
\]
Hence $e_{t_{n}}$ and $\varepsilon_{t_{n}}$ are close for all large $n$ if $0\leq\theta<1$, which shows that resampling is proper in this situation.

Next,  the bootstrap method is applied to the estimation of $\theta$ in the IMA process with $\sigma^{2}=1$. Let $X_{t}$ be observed at points $t_{1},\ldots,t_{\textrm{N}}$ and consider $\hat{\theta}_{\textrm{N}}$ as the ML estimator. The standardized estimated innovations are
\[
e_{t_{n}}^{s}=\frac{X_{t_{n}}-\hat{X}_{t_{n}}(\hat{\theta}_{\textrm{N}})}{\sqrt{c_{n}(\hat{\theta}_{\textrm{N}})}},
\]
for $n=2,\ldots,\textrm{N}$. The so-called model-based resampling might proceed by equi-probable sampling with replacement from centered residuals $e_{t_{2}}^{sc}=e_{t_{2}}^{s}-\bar{e},\ldots,e_{t_{\textrm{N}}}^{sc}=e_{t_{\textrm{N}}}^{s}-\bar{e}$, where $\bar{e}=\sum_{n=2}^{N}\nicefrac{e_{t_{n}}^{s}}{\textrm{N}-1}$, to obtain simulated innovations $e_{t_{1}^{\ast}}^{sc},\ldots,e_{t_{\textrm{N}}^{\ast}}^{sc}$, with $t_{1}^{\ast},\ldots,t_{\textrm{N}}^{\ast}$  the sampled times, and then setting
\begin{gather*}
X_{t_{1}}^{\ast}=\sqrt{c_{1}(\hat{\theta}_{\textrm{N}})}e_{t_{1}^{\ast}}^{sc},\\
X_{t_{n}}^{\ast}=\sqrt{c_{n}(\hat{\theta}_{\textrm{N}})}e_{t_{n}^{\ast}}^{sc}+\frac{\hat{\theta}_{\textrm{N}}^{\Delta_{n}}}{c_{n-1}(\hat{\theta}_{\textrm{N}})}\sqrt{c_{n-1}(\hat{\theta}_{\textrm{N}})}e_{t_{n-1}^{\ast}}^{sc},\quad\textrm{for}\;n=2,\ldots,\textrm{N}.
\end{gather*}

Next,  the parameters are estimated via  ML assuming that the data are $X_{t_{n}}^{\ast}$. This process is repeated  $\textrm{B}$ times, generating a collection of bootstrapped parameter estimates. Finally,  the finite sample distribution of the estimator, $\hat{\theta}_{\textrm{N}}$, is estimated from the bootstrapped parameter values.

\section{Simulations\label{sec:IMA-MonteCarlo}}

This section provides a Monte Carlo study that assesses the finite-sample performance of the maximum likelihood and bootstrap estimators. The simulation setting  consider $\sigma^{2}=1$, $\theta\in\left\{ 0.1,0.5,0.9\right\} $ and $\textrm{N}\in\left\{ 100,500,1500\right\} $, where $\textrm{N}$ represent the length of the series. Furthermore,  $\textrm{M}=1000$ trajectories, $\{\textrm{S}_{m}\}_{m=1}^{\textrm{M}}$ are simulated, and  $\theta$ is estimated. For each setup,  regular as well as irregular spaced times $t_{1},\ldots,t_{\textrm{N}}$ are considered, where $t_{n}-t_{n-1}\overset{\textrm{ind}}{\sim}1+\textrm{exp}(\lambda=1)$, for $n=2,\ldots,\textrm{N}$. In Figure \ref{TrajectoryIMAExample}, we present an IMA trajectory example with the tick marks of irregularly
spaced times.

\begin{figure}
\begin{centering}
\includegraphics[height=3in,width=5.5in]{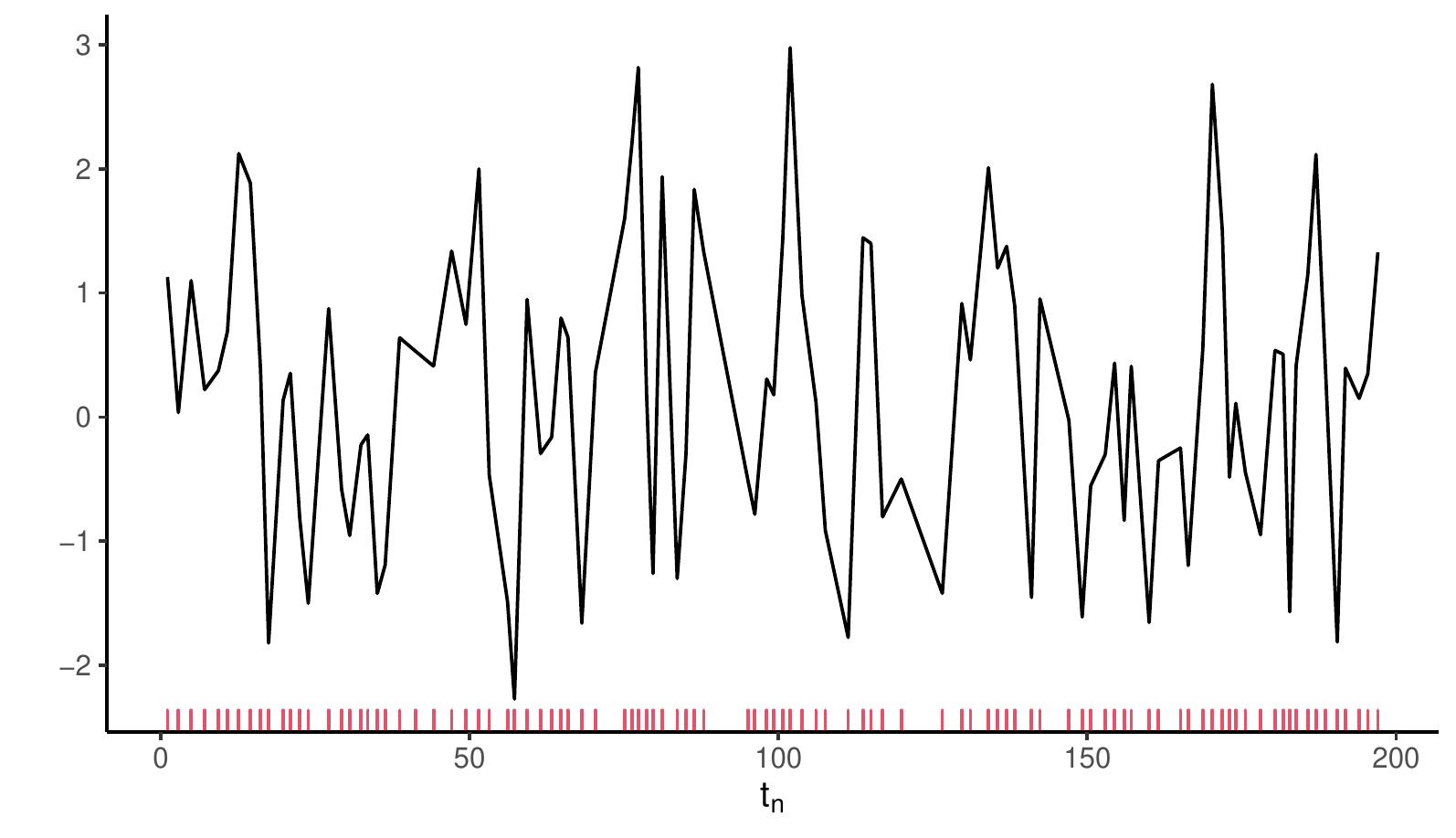}
\end{centering}
\caption{IMA trajectory example with $\theta=0.5$, $\sigma^{2}=1$ and $\textrm{N}=100$. On the bottom, we found the tick marks of irregularly spaced times.}
\label{TrajectoryIMAExample}
\end{figure}

\subsection{ML estimators}

Let $\hat{\theta}_{m}^{\textrm{MLE}}$ be the ML estimation and $\widehat{\textrm{se}}(\hat{\theta}_{m}^{\textrm{MLE}})$ be the estimated standard error for the $m$-th trajectory. The standard error is estimated by curvature of the likelihood surface at $\hat{\theta}_{m}^{\textrm{MLE}}$. The mean value of the $\textrm{M}$ maximum likelihood estimations is computed as,
\[
\hat{\theta}^{\textrm{MLE}}=\frac{1}{\textrm{M}}\sum_{m=1}^{\textrm{M}}\hat{\theta}_{m}^{\textrm{MLE}}\quad\textrm{and}\quad\widehat{\textrm{se}}(\hat{\theta}^{\textrm{MLE}})=\frac{1}{\textrm{M}}\sum_{m=1}^{\textrm{M}}\widehat{\textrm{se}}(\hat{\theta}_{m}^{\textrm{MLE}}).
\]

\subsection{Bootstrap estimators}

For each trajectory, we simulated $\textrm{B}=500$
bootstrap trajectories represented by $\{\{\textrm{S}_{m,b}\}_{b=1}^{\textrm{B}}\}_{m=1}^{\textrm{M}}$. Then, the sequence $\{\{\hat{\theta}_{m,b}^{\textrm{b}}\}_{b=1}^{\textrm{B}}\}_{m=1}^{\textrm{M}}$ (the ML estimations) is obtained. The bootstrap estimation and their estimated variance error are defined as
\[
\hat{\theta}_{m}^{\textrm{b}}=\frac{1}{\textrm{B}}\sum_{b=1}^{\textrm{B}}\hat{\theta}_{m,b}^{\textrm{b}}\quad\textrm{and}\quad\widehat{\textrm{se}}^{2}(\hat{\theta}_{m}^{\textrm{b}})=\frac{1}{\textrm{B}-1}\sum_{b=1}^{\textrm{B}}(\hat{\theta}_{m,b}^{\textrm{b}}-\hat{\theta}_{m}^{\textrm{b}})^{2},
\]
for $m=1,\ldots,\textrm{M}$. Finally, the mean value of the $\textrm{M}$ bootstrap estimations is given by
\[
\hat{\theta}^{\textrm{b}}=\frac{1}{\textrm{M}}\sum_{m=1}^{\textrm{M}}\hat{\theta}_{m}^{\textrm{b}}\quad\textrm{and}\quad\widehat{\textrm{se}}(\hat{\theta}^{\textrm{b}})=\frac{1}{\textrm{M}}\sum_{m=1}^{\textrm{M}}\widehat{\textrm{se}}(\hat{\theta}_{m}^{\textrm{b}}).
\]

\subsection{Performance measures}

Besides, as a measure of the estimator performance, the Root Mean Squared Error (RMSE), and the Coefficient of Variation (CV) are considered. Recall that in this case the RMSE and the CV may be written as
\begin{gather*}
\textrm{RMSE}_{\hat{\theta}^{\textrm{MLE}}}=(\widehat{\textrm{se}}(\hat{\theta}^{\textrm{MLE}})^{2}+\textrm{bias}_{\hat{\theta}^{\textrm{MLE}}}^{2})^{\nicefrac{1}{2}},\;\textrm{and}\\
\textrm{CV}_{\hat{\theta}^{\textrm{MLE}}}=\frac{\widehat{\textrm{se}}(\hat{\theta}^{\textrm{MLE}})}{\left|\hat{\theta}^{\textrm{MLE}}\right|},
\end{gather*}
where $\textrm{bias}_{\hat{\theta}^{\textrm{MLE}}}=\hat{\theta}^{\textrm{MLE}}-\theta$. Finally,  an approximate variance estimator is given by
\[
\widetilde{\textrm{se}}^{2}(\hat{\theta}^{\textrm{MLE}})=\frac{1}{\textrm{M}-1}\sum_{m=1}^{\textrm{M}}(\hat{\theta}_{m}^{\textrm{MLE}}-\hat{\theta}^{\textrm{MLE}})^{2}.
\]

$\textrm{RMSE}_{\hat{\theta}^{\textrm{b}}}$, $\textrm{CV}_{\hat{\theta}^{\textrm{b}}}$, $\textrm{bias}_{\hat{\theta}^{\textrm{b}}}$ and $\widetilde{\textrm{se}}^{2}(\hat{\theta}^{\textrm{b}})$ for the bootstrap case are defined analogously. Figure \ref{GeneralSchemeMC} shows the workflow of the simulation study.

\begin{figure}
\begin{centering}
\begin{tikzpicture}[scale = 0.85]
\node{IMA}[sibling distance=4.5cm]
 child{node{$\textrm{S}_{1}$}
  child{node{$\{\hat{\theta}_{1}^{\textrm{MLE}},\widehat{\textrm{se}}(\hat{\theta}_{1}^{\textrm{MLE}})\}$}[sibling distance=1.5cm]
   child{node{$\textrm{S}_{1,1}$}
    child{node{$\{\hat{\theta}_{1,1}^{\textrm{b}}\}$}}}
   child{node{$\cdots\;\;\textrm{S}_{1,b}\;\;\cdots$}
    child{node{$\cdots\{\hat{\theta}_{1,b}^{\textrm{b}}\}\cdots$}[level distance = 0.7cm]
     child{node{$\overset{\underbrace{\hspace{3cm}}}{\{\hat{\theta}_{1}^{\textrm{b}},\widehat{\textrm{se}}(\hat{\theta}_{1}^{\textrm{b}})\}}$} edge from parent [draw=none]}}}
   child{node{$\textrm{S}_{1,\textrm{B}}$}
    child{node{$\{\hat{\theta}_{1,\textrm{B}}^{\textrm{b}}\}$}}}}}
 child{node{$\cdots\qquad\qquad\textrm{S}_{m}\qquad\qquad\cdots$}
  child{node{$\cdots\qquad\{\hat{\theta}_{m}^{\textrm{MLE}},\widehat{\textrm{se}}(\hat{\theta}_{m}^{\textrm{MLE}})\}\qquad\cdots$}[sibling   distance=1.5cm]
   child{node{$\textrm{S}_{m,1}$}
    child{node{$\{\hat{\theta}_{m,1}^{\textrm{b}}\}$}}}
   child{node{$\cdots\;\;\textrm{S}_{m,b}\;\;\cdots$}
    child{node{$\cdots\{\hat{\theta}_{m,b}^{\textrm{b}}\}\cdots$}[level distance = 0.7cm]
     child{node{$\overset{\underbrace{\hspace{3cm}}}{\{\hat{\theta}_{2}^{\textrm{b}},\widehat{\textrm{se}}(\hat{\theta}_{2}^{\textrm{b}})\}}$} edge from parent [draw=none]}}}
   child{node{$\textrm{S}_{m,\textrm{B}}$}
    child{node{$\{\hat{\theta}_{m,\textrm{B}}^{\textrm{b}}\}$}}}}}
 child{node{$\textrm{S}_{\textrm{M}}$}
  child{node{$\{\hat{\theta}_{\textrm{M}}^{\textrm{MLE}},\widehat{\textrm{se}}(\hat{\theta}_{\textrm{M}}^{\textrm{MLE}})\}$}[sibling distance=1.5cm]
   child{node{$\textrm{S}_{\textrm{M},1}$}
    child{node{$\{\hat{\theta}_{\textrm{M},1}^{\textrm{b}}\}$}}}
   child{node{$\cdots\;\;\textrm{S}_{\textrm{M},b}\;\;\cdots$}
    child{node{$\cdots\{\hat{\theta}_{\textrm{M},b}^{\textrm{b}}\}\cdots$}[level distance = 0.7cm]
     child{node{$\overset{\underbrace{\hspace{3cm}}}{\{\hat{\theta}_{\textrm{M}}^{\textrm{b}},\widehat{\textrm{se}}(\hat{\theta}_{\textrm{M}}^{\textrm{b}})\}}$} edge from parent [draw=none]}}}
   child{node{$\textrm{S}_{\textrm{M},\textrm{B}}$}
    child{node{$\{\hat{\theta}_{\textrm{M},\textrm{B}}^{\textrm{b}}\}$}}}}};
\end{tikzpicture}
\end{centering}
\caption{General scheme of Monte Carlo study. Here, we show how we got the pairs $\{\hat{\theta}_{m}^{\textrm{mle}},\hat{\textrm{se}}(\hat{\theta}_{m}^{\textrm{mle}})\}_{m=1}^{\textrm{M}}$ and $\{\hat{\theta}_{m}^{\textrm{b}},\hat{\textrm{se}}(\hat{\theta}_{m}^{\textrm{b}})\}_{m=1}^{\textrm{M}}$
for each trajectory.}
\label{GeneralSchemeMC}
\end{figure}
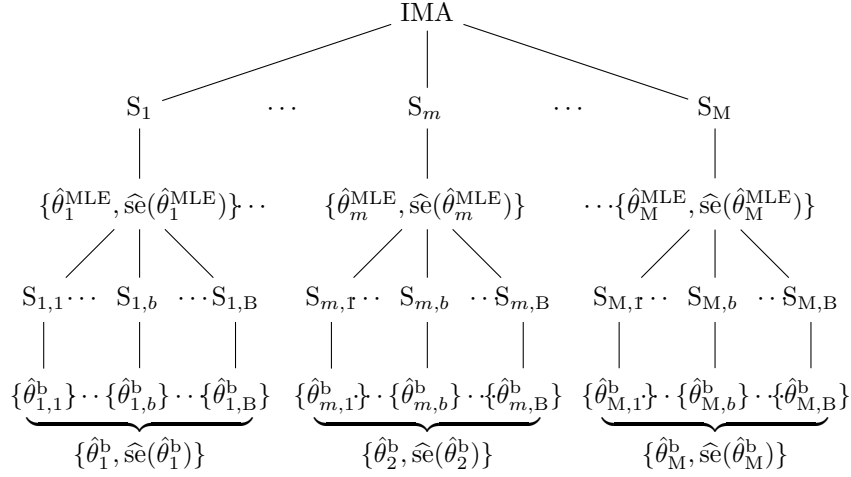

Next,  the results for the irregularly spaced time case are reported (see Appendix \ref{sec:IMA-MC-RegularySpacedTimes} for the regularly spaced case). First, the simulated finite sample distributions for maximum likelihood and bootstrap estimators are shown. Second,   the measures of performance for both estimators, are shown. Separately,  the Monte Carlo Error (MCE) is estimated  for every simulation via asymptotic theory \citep[see,][]{Koehler2009}, 
\[
\frac{\widetilde{\textrm{se}}(\hat{\theta}^{\textrm{MLE}})}{\sqrt{M}},
\]
and  the maximum value is reported in each table. The MCE is a estimation of the standard deviation of the Monte Carlo estimator, taken across repetitions of the simulation, where each simulation is based on the same design and consists of $M$ replications.

\subsection{Finite sample distributions for ML and bootstrap estimators}

In Figure \ref{IMA-simulatedFiniteSampleDistributions}, the simulated finite sample distributions for maximum likelihood and bootstrap estimators are reported. Observe that both estimators behave  well in these Monte Carlo experiments. They seem to be unbiased since the mean of the estimated values are close to their theoretical counterparts. However, for small values of $\theta_{0}$, the  estimates exhibit greater variability.

\begin{figure}
\begin{centering}
\includegraphics[height=3in,width=5.5in]{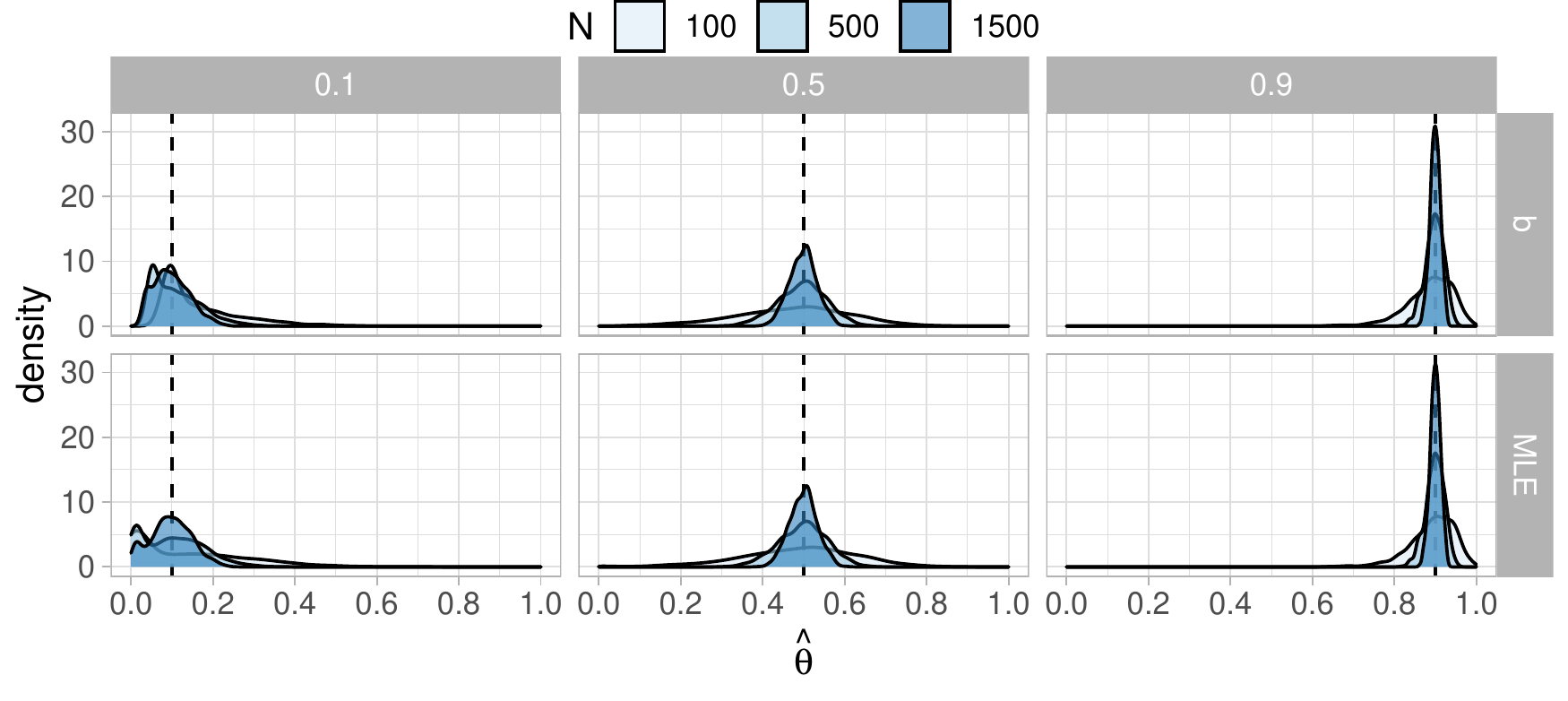}
\end{centering}
\caption{Simulated finite sample distributions. On the top, the MLE case. On the bottom, the bootstrap case.}
\label{IMA-simulatedFiniteSampleDistributions}
\end{figure}

Tables \ref{IMA-MC-Table-IrregularSpaced-MLE} and \ref{IMA-MC-Table-IrregularSpaced-b} exhibit the performance measures of the estimators, for maximum likelihood and bootstrap methods, respectively. Bias and variance are smaller when $\textrm{N}$ increases as expected. Both methods provide good estimations for the standard error. Also, the irregularly spaced times seem to increase the estimation variability. In Appendix \ref{sec:IMA-MC-RegularySpacedTimes}, it is presented the regularly spaced times case. In this case, the IMA model is reduced to the conventional MA model, and the results are equally consistent.

\begin{center}
	\begin{table}
		\caption{Monte Carlo results for the ML estimator with irregularly spaced times. The MCE estimated is $0.005$.}
		\label{IMA-MC-Table-IrregularSpaced-MLE}
		\begin{centering}
			\begin{tabular}{|c|c|c|c|c|c|c|c|}
				\hline 
				N & $\theta_{0}$ & $\hat{\theta}^{\textrm{MLE}}$ & $\widehat{\textrm{se}}(\hat{\theta}^{\textrm{MLE}})$ & $\widetilde{\textrm{se}}(\hat{\theta}^{\textrm{MLE}})$ & $\textrm{bias}_{\hat{\theta}^{\textrm{MLE}}}$ & $\textrm{RMSE}_{\hat{\theta}^{\textrm{MLE}}}$ & $\textrm{CV}_{\hat{\theta}^{\textrm{MLE}}}$\tabularnewline
				\hline 
				\multirow{3}{*}{$100$} & $0.1$ & $0.132$ & $0.184$ & $0.135$ & $0.032$ & $0.186$ & $1.389$\tabularnewline
				\cline{2-8} \cline{3-8} \cline{4-8} \cline{5-8} \cline{6-8} \cline{7-8} \cline{8-8} 
				& $0.5$ & $0.486$ & $0.134$ & $0.144$ & $-0.014$ & $0.135$ & $0.276$\tabularnewline
				\cline{2-8} \cline{3-8} \cline{4-8} \cline{5-8} \cline{6-8} \cline{7-8} \cline{8-8} 
				& $0.9$ & $0.893$ & $0.051$ & $0.055$ & $-0.007$ & $0.052$ & $0.057$\tabularnewline
				\hline 
				\multirow{3}{*}{$500$} & $0.1$ & $0.100$ & $0.093$ & $0.074$ & $0.000$ & $0.093$ & $0.930$\tabularnewline
				\cline{2-8} \cline{3-8} \cline{4-8} \cline{5-8} \cline{6-8} \cline{7-8} \cline{8-8} 
				& $0.5$ & $0.498$ & $0.058$ & $0.059$ & $-0.002$ & $0.058$ & $0.117$\tabularnewline
				\cline{2-8} \cline{3-8} \cline{4-8} \cline{5-8} \cline{6-8} \cline{7-8} \cline{8-8} 
				& $0.9$ & $0.899$ & $0.022$ & $0.022$ & $-0.001$ & $0.022$ & $0.024$\tabularnewline
				\hline 
				\multirow{3}{*}{$1500$} & $0.1$ & $0.097$ & $0.056$ & $0.050$ & $-0.003$ & $0.056$ & $0.574$\tabularnewline
				\cline{2-8} \cline{3-8} \cline{4-8} \cline{5-8} \cline{6-8} \cline{7-8} \cline{8-8} 
				& $0.5$ & $0.499$ & $0.034$ & $0.034$ & $-0.001$ & $0.034$ & $0.068$\tabularnewline
				\cline{2-8} \cline{3-8} \cline{4-8} \cline{5-8} \cline{6-8} \cline{7-8} \cline{8-8} 
				& $0.9$ & $0.900$ & $0.012$ & $0.012$ & $0.000$ & $0.012$ & $0.014$\tabularnewline
				\hline
			\end{tabular}
		\par\end{centering}
	\end{table}
\par\end{center}				
				
\begin{center}
	\begin{table}
		\caption{Monte Carlo results for the bootstrap estimator with irregularly spaced times. The MCE estimated is $0.005$.}
		\label{IMA-MC-Table-IrregularSpaced-b}
		\begin{centering}
			\begin{tabular}{|c|c|c|c|c|c|c|c|}
				\hline 
				N & $\theta_{0}$ & $\hat{\theta}^{\textrm{b}}$ & $\widehat{\textrm{se}}(\hat{\theta}^{\textrm{b}})$ & $\widetilde{\textrm{se}}(\hat{\theta}^{\textrm{b}})$ & $\textrm{bias}_{\hat{\theta}^{\textrm{b}}}$ & $\textrm{RMSE}_{\hat{\theta}^{\textrm{b}}}$ & $\textrm{CV}_{\hat{\theta}^{\textrm{b}}}$\tabularnewline
				\hline 
				\multirow{3}{*}{$100$} & $0.1$ & $0.167$ & $0.136$ & $0.098$ & $0.067$ & $0.151$ & $0.811$\tabularnewline
				\cline{2-8} \cline{3-8} \cline{4-8} \cline{5-8} \cline{6-8} \cline{7-8} \cline{8-8} 
				& $0.5$ & $0.473$ & $0.140$ & $0.139$ & $-0.027$ & $0.143$ & $0.296$\tabularnewline
				\cline{2-8} \cline{3-8} \cline{4-8} \cline{5-8} \cline{6-8} \cline{7-8} \cline{8-8} 
				& $0.9$ & $0.887$ & $0.055$ & $0.056$ & $-0.013$ & $0.056$ & $0.062$\tabularnewline
				\hline 
				\multirow{3}{*}{$500$} & $0.1$ & $0.111$ & $0.069$ & $0.059$ & $0.011$ & $0.070$ & $0.626$\tabularnewline
				\cline{2-8} \cline{3-8} \cline{4-8} \cline{5-8} \cline{6-8} \cline{7-8} \cline{8-8} 
				& $0.5$ & $0.496$ & $0.059$ & $0.059$ & $-0.004$ & $0.059$ & $0.118$\tabularnewline
				\cline{2-8} \cline{3-8} \cline{4-8} \cline{5-8} \cline{6-8} \cline{7-8} \cline{8-8} 
				& $0.9$ & $0.898$ & $0.022$ & $0.023$ & $-0.002$ & $0.022$ & $0.024$\tabularnewline
				\hline 
				\multirow{3}{*}{$1500$} & $0.1$ & $0.099$ & $0.047$ & $0.043$ & $-0.001$ & $0.047$ & $0.474$\tabularnewline
				\cline{2-8} \cline{3-8} \cline{4-8} \cline{5-8} \cline{6-8} \cline{7-8} \cline{8-8} 
				& $0.5$ & $0.498$ & $0.034$ & $0.033$ & $-0.002$ & $0.034$ & $0.068$\tabularnewline
				\cline{2-8} \cline{3-8} \cline{4-8} \cline{5-8} \cline{6-8} \cline{7-8} \cline{8-8} 
				& $0.9$ & $0.900$ & $0.012$ & $0.012$ & $0.000$ & $0.012$ & $0.014$\tabularnewline
				\hline 
			\end{tabular}
		\par\end{centering}
	\end{table}
\par\end{center}


A second simulation experiment is performed in order to assess the performance of the CARMA$(p,q)$ model in fitting an IMA simulated process. The CARMA models has been generally used to fit irregularly sampled time series. However, as mentioned previously, the CARMA models are restricted to $p>q$, so the IMA model is not a particular case of the CARMA model \citep[see,][]{Phadke1974,Thornton2013}. For this reason, it is interesting to evaluate whether a CARMA model can properly fit an IMA process.

In this experiment, we generate 1000 sequences of the IMA process with two different lengths and four different values of the $\theta$ parameter. The irregular times were generated using a mixture of two exponential distributions with $130$ and $6.5$ as the mean of each exponential distribution and $0.15$ and $0.85$ as their respective weights. We chose this distribution of times in order to obtain large time gaps, which can be an issue when a model assumes continuous time. 

In order to fit the CARMA model we use the function \textit{carma} of the \textit{growth} package of {\em R}. In addition, we opted to use the CARMA$(2,1)$ model since it is the most parsimonious CARMA model with a moving average component that can be fitted. Finally, we used the  Root mean squared error as the goodness-of-fit measure. In table \ref{IMA-CARMA} we present the Monte Carlo results of this experiment. Note that the IMA model consistently outperforms  the CARMA$(2,1)$ model. In particular, for large values of $\theta$ and $N$ the IMA model have almost 100\% of success rate when selecting the model with lowest MSE. This result is relevant because when using the -2 log likelihood as a goodness-of-fit measure we are not penalizing by the number of parameters and anyways the IMA model has better performance than the CARMA model. In Appendix \ref{sec:IMA-CARMA-App}, we uses other goodness-of-fit measures such as the Akaike Criterion and the -2 log likelihood. Considering these criteria, the IMA model also fits the simulated processes better than the CARMA$(2,1)$ model in most cases.

\begin{center}
	\begin{table}
		\caption{Monte Carlo results of the experiment to compare the performance of the IMA and CARMA(2,1) models in fitting IMA model. The goodness of fit measure estimated from both models is the Root Mean Squared Error.}
		\label{IMA-CARMA-App2}
		\begin{centering}
			\begin{tabular}{|c|c|c|c|c|c|c|c|c|}
				\hline 
				N & $\theta_{0}$ & $\hat{\theta}$ & $\widehat{\textrm{se}}(\hat{\theta})$ & MSE\_IMA & SD(MSE\_IMA) & MSE\_C21 & SD(MSE\_C21) &  \% Success  \tabularnewline
				\hline 
				\multirow{4}{*}{$300$}& 0.95 & 0.9437 & 0.0568 & 0.8383 & 0.0440 & 0.9154 & 0.0554 & 0.9880 \tabularnewline
				\cline{2-9} \cline{3-9} \cline{4-9} \cline{5-9} \cline{6-9} \cline{7-9} \cline{8-9} \cline{9-9} 
				& 0.9 & 0.8912 & 0.0534 & 0.8815 & 0.0422 & 0.9272 & 0.0501 & 0.9550 \tabularnewline
				\cline{2-9} \cline{3-9} \cline{4-9} \cline{5-9} \cline{6-9} \cline{7-9} \cline{8-9} \cline{9-9}  
				& 0.7 & 0.6762 & 0.1171 & 0.9277 & 0.0398 & 0.9432 & 0.0421 & 0.8240 \tabularnewline
								\cline{2-9} \cline{3-9} \cline{4-9} \cline{5-9} \cline{6-9} \cline{7-9} \cline{8-9} \cline{9-9} 
				& 0.5 & 0.4900 & 0.1346 & 0.9427 & 0.0381 & 0.9496 & 0.0416 & 0.6220 \tabularnewline
				\hline 
				\multirow{4}{*}{$1000$} & 0.95 & 0.9464 & 0.0473 & 0.8385 & 0.0336 & 0.9200 & 0.0497 & 0.9970 \tabularnewline
				\cline{2-9} \cline{3-9} \cline{4-9} \cline{5-9} \cline{6-9} \cline{7-9} \cline{8-9} \cline{9-9} 
				& 0.9  & 0.8971 & 0.0399 & 0.8818 & 0.0332 & 0.9289 & 0.0435 & 0.9970 \tabularnewline
				\cline{2-9} \cline{3-9} \cline{4-9} \cline{5-9} \cline{6-9} \cline{7-9} \cline{8-9} \cline{9-9} 
				 & 0.7 & 0.6924 & 0.0675 & 0.9311 & 0.0329 & 0.9456 & 0.0355 & 0.9620 \tabularnewline
				\cline{2-9} \cline{3-9} \cline{4-9} \cline{5-9} \cline{6-9} \cline{7-9} \cline{8-9} \cline{9-9} 
				& 0.5 & 0.4949 & 0.0737 & 0.9463 & 0.0324 & 0.9541 & 0.0340 & 0.8980  \tabularnewline
				\hline 
			\end{tabular}
		\par\end{centering}
	\end{table}
\par\end{center}

\subsection{Non-Gaussian errors}

There are many applications where Gaussian assumption is misspecified, but it is assumed in the likelihood to obtain the ML estimators. These estimators are commonly known as quasi-ML estimators or QMLE.

This section shows the performance measures for the QMLE in a Monte Carlo experiment where it is simulated trajectories of the IMA model with non-Gaussian errors exploiting its constructionist viewpoint. First, it is used a Student distribution with shape parameter 7 (degree of freedom). Second, a Generalized error distribution with shape parameter 1.28 (heavy tails) is used. Both distribution are standardized according to specifications used by \cite{Ghalanos2020}.

Tables \ref{IMA-MC-Table-IrregularSpaced-std-QMLE} and \ref{IMA-MC-Table-IrregularSpaced-ged-QMLE} display the results obtained for Student and Generalized errors, respectively. In both cases, the performance obtained for the QMLE is very similar to the performance obtained for the MLE presented in Table 1 (Gaussian case). This suggests that QML estimators are robust to deviations from Gaussianity.

\begin{center}
	\begin{table}
		\caption{Monte Carlo results for the QML estimator with irregularly spaced times and Student errors with shape parameter $7$. The MCE estimated is $0.004$.}
		\label{IMA-MC-Table-IrregularSpaced-std-QMLE}
		\begin{centering}
			\begin{tabular}{|c|c|c|c|c|c|c|c|}
				\hline 
				N & $\theta_{0}$ & $\hat{\theta}^{\textrm{MLE}}$ & $\widehat{\textrm{se}}(\hat{\theta}^{\textrm{MLE}})$ & $\widetilde{\textrm{se}}(\hat{\theta}^{\textrm{MLE}})$ & $\textrm{bias}_{\hat{\theta}^{\textrm{MLE}}}$ & $\textrm{RMSE}_{\hat{\theta}^{\textrm{MLE}}}$ & $\textrm{CV}_{\hat{\theta}^{\textrm{MLE}}}$\tabularnewline
				\hline          
				\multirow{3}{*}{$100$} & $0.1$ & $0.126$ & $0.183$ & $0.133$ & $0.026$ & $0.185$ & $1.452$\tabularnewline
				\cline{2-8} \cline{3-8} \cline{4-8} \cline{5-8} \cline{6-8} \cline{7-8} \cline{8-8}   
				& $0.5$ & $0.488$ & $0.138$ & $0.140$ & $-0.012$ & $0.138$ & $0.283$\tabularnewline
				\cline{2-8} \cline{3-8} \cline{4-8} \cline{5-8} \cline{6-8} \cline{7-8} \cline{8-8}          
				& $0.9$ & $0.897$ & $0.050$ & $0.054$ & $-0.003$ & $0.050$ & $0.055$\tabularnewline
				\hline        
				\multirow{3}{*}{$500$} & $0.1$ & $0.098$ & $0.099$ & $0.075$ & $-0.002$ & $0.099$ & $1.012$\tabularnewline
				\cline{2-8} \cline{3-8} \cline{4-8} \cline{5-8} \cline{6-8} \cline{7-8} \cline{8-8}          
				& $0.5$ & $0.495$ & $0.060$ & $0.063$ & $-0.005$ & $0.060$ & $0.122$\tabularnewline
				\cline{2-8} \cline{3-8} \cline{4-8} \cline{5-8} \cline{6-8} \cline{7-8} \cline{8-8}       
				& $0.9$ & $0.899$ & $0.022$ & $0.022$ & $-0.001$ & $0.022$ & $0.024$\tabularnewline
				\hline           
				\multirow{3}{*}{$1500$} & $0.1$ & $0.097$ & $0.056$ & $0.050$ & $-0.003$ & $0.056$ & $0.575$\tabularnewline
				\cline{2-8} \cline{3-8} \cline{4-8} \cline{5-8} \cline{6-8} \cline{7-8} \cline{8-8}           
				& $0.5$ & $0.498$ & $0.034$ & $0.035$ & $-0.002$ & $0.034$ & $0.069$\tabularnewline
				\cline{2-8} \cline{3-8} \cline{4-8} \cline{5-8} \cline{6-8} \cline{7-8} \cline{8-8}          
				& $0.9$ & $0.900$ & $0.012$ & $0.013$ & $0.000$ & $0.012$ & $0.014$\tabularnewline
				\hline
			\end{tabular}
		\par\end{centering}
	\end{table}
\par\end{center}

\begin{center}
	\begin{table}
		\caption{Monte Carlo results for the QML estimator with irregularly spaced times and Generalized errors  with shape parameter $1.28$. The MCE estimated is $0.004$.}
		\label{IMA-MC-Table-IrregularSpaced-ged-QMLE}
		\begin{centering}
			\begin{tabular}{|c|c|c|c|c|c|c|c|}
				\hline 
				N & $\theta_{0}$ & $\hat{\theta}^{\textrm{MLE}}$ & $\widehat{\textrm{se}}(\hat{\theta}^{\textrm{MLE}})$ & $\widetilde{\textrm{se}}(\hat{\theta}^{\textrm{MLE}})$ & $\textrm{bias}_{\hat{\theta}^{\textrm{MLE}}}$ & $\textrm{RMSE}_{\hat{\theta}^{\textrm{MLE}}}$ & $\textrm{CV}_{\hat{\theta}^{\textrm{MLE}}}$\tabularnewline
				\hline                
				\multirow{3}{*}{$100$} & $0.1$ & $0.128$ & $0.193$ & $0.135$ & $0.028$ & $0.195$ & $1.503$\tabularnewline
				\cline{2-8} \cline{3-8} \cline{4-8} \cline{5-8} \cline{6-8} \cline{7-8} \cline{8-8}          
				& $0.5$ & $0.491$ & $0.136$ & $0.139$ & $-0.009$ & $0.136$ & $0.276$\tabularnewline
				\cline{2-8} \cline{3-8} \cline{4-8} \cline{5-8} \cline{6-8} \cline{7-8} \cline{8-8}                    
				& $0.9$ & $0.896$ & $0.050$ & $0.053$ & $-0.004$ & $0.051$ & $0.056$\tabularnewline
				\hline                   
				\multirow{3}{*}{$500$} & $0.1$ & $0.102$ & $0.096$ & $0.077$ & $0.002$ & $0.096$ & $0.945$\tabularnewline
				\cline{2-8} \cline{3-8} \cline{4-8} \cline{5-8} \cline{6-8} \cline{7-8} \cline{8-8}                   
				& $0.5$ & $0.497$ & $0.060$ & $0.062$ & $-0.003$ & $0.060$ & $0.120$\tabularnewline
				\cline{2-8} \cline{3-8} \cline{4-8} \cline{5-8} \cline{6-8} \cline{7-8} \cline{8-8}           
				& $0.9$ & $0.899$ & $0.022$ & $0.021$ & $-0.001$ & $0.022$ & $0.024$\tabularnewline
				\hline                   
				\multirow{3}{*}{$1500$} & $0.1$ & $0.098$ & $0.055$ & $0.050$ & $-0.002$ & $0.055$ & $0.561$\tabularnewline
				\cline{2-8} \cline{3-8} \cline{4-8} \cline{5-8} \cline{6-8} \cline{7-8} \cline{8-8}                
				& $0.5$ & $0.500$ & $0.034$ & $0.034$ & $0.000$ & $0.034$ & $0.068$\tabularnewline
				\cline{2-8} \cline{3-8} \cline{4-8} \cline{5-8} \cline{6-8} \cline{7-8} \cline{8-8}                 
				& $0.9$ & $0.900$ & $0.012$ & $0.012$ & $0.000$ & $0.012$ & $0.014$\tabularnewline
				\hline
			\end{tabular}
			\par\end{centering}
	\end{table}
	\par\end{center}

\section{Applications\label{sec:IMA-Application}}
This section illustrates the application of the proposed time series methodology to two real-life datasets. The first example is concerned with medical data whereas in the second application is described the analysis of light curves in astronomy.

\subsection{Lung function of an asthma patient}
\citet{Belcher1994} analyzed measurements of the lung function of an asthma patient. The  observations are collected mostly at 2 hour time intervals but with irregular gaps (see the unequal spaced of tick marks in Figure \ref{medical-application}). However, as it was shown in \citet{Wang2013}, the trend component (obtained by decomposing original time series into trend, seasonal, and irregular components via the Kalman smoother) exhibits structural changes after 100$th$ observation. Thus,  the first 100 observations are considered here to analyze such a phenomenon. Below,  the ML and bootstrap estimates are reported along with  their respective estimated standard errors. Note that the estimates are significant at the 5\% significance level.
\[
\hat{\theta}^{\textrm{MLE}}=0.853\quad\widehat{\textrm{se}}(\hat{\theta}^{\textrm{MLE}})=0.069\quad\hat{\sigma}_{\textrm{MLE}}^{2}=258.286\quad\widehat{\textrm{se}}(\hat{\sigma}_{\textrm{MLE}}^{2})=36.537
\]

\[
\hat{\theta}^{\textrm{b}}=0.841\quad\widehat{\textrm{se}}(\hat{\theta}^{\textrm{b}})=0.077\quad\hat{\sigma}_{\textrm{b}}^{2}=259.270\quad\widehat{\textrm{se}}(\hat{\sigma}_{\textrm{b}}^{2})=32.662
\]

\begin{figure}
\begin{centering}
\includegraphics[scale=0.75]{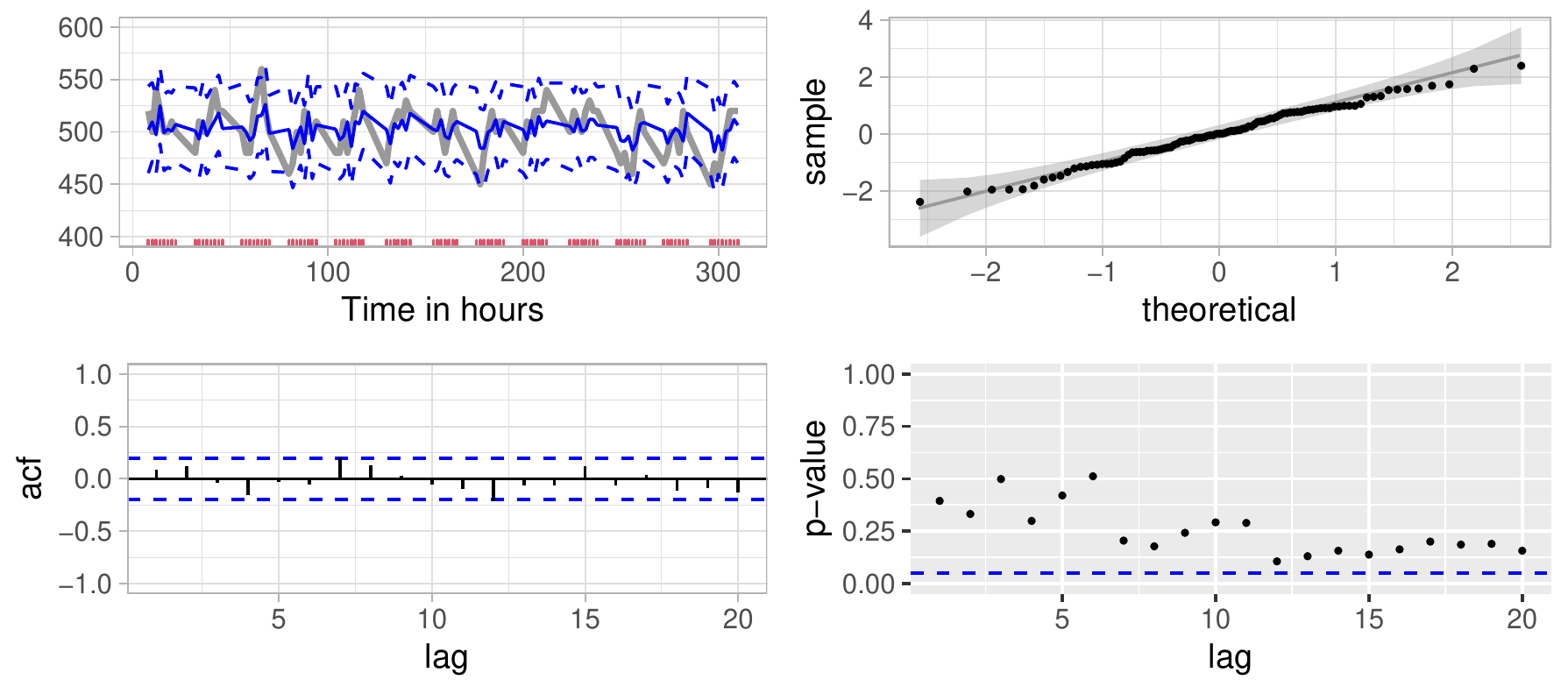}
\end{centering}
\caption{On the left-top, the lung function of an asthma patient with the predicted values and their respective variability bands. On the right-top, quantile-quantile plot of the standardized residuals with normality reference bands. In this case, we use $\hat{\theta}^{\textrm{MLE}}$. On the bottom-left, the autocorrelation function estimated of the standardized residuals. On the bottom-right, the Ljung-Box test for randomness of the standardized residuals.}
\label{medical-application}
\end{figure}

From Figure \ref{medical-application}, the fit seems adequate. Also, the standardized residuals seem to follow a standard normal distribution \citep{Nair1982}. Figure \ref{medical-application} shows the ACF estimated and the results from a Ljung-Box test for the standardized residuals. Observe that the residuals satisfy the white noise test at the $5$\% significance level. Note that, since the standardized residuals are assumed  to be realizations of a random sample, its correlation structure does not depend on the irregularly spaced between observations. Thus, unlike the original time series, the  ACF and the Ljung-Box test can be applied to the standardized residuals.

Estimated mean squared errors for the one-step predictors are presented in Figure \ref{application-MSE} under the IMA and the MA model. Notice that under the IMA model, the MSE is smaller when the time distances become smaller while, under the MA model, the MSE is constant regardless of the size of these distances. Also, it is important to consider that both MSE, on average, exhibit similar behavior.

\begin{figure}
\begin{centering}
\includegraphics[height=2.5in,width=4.5in]{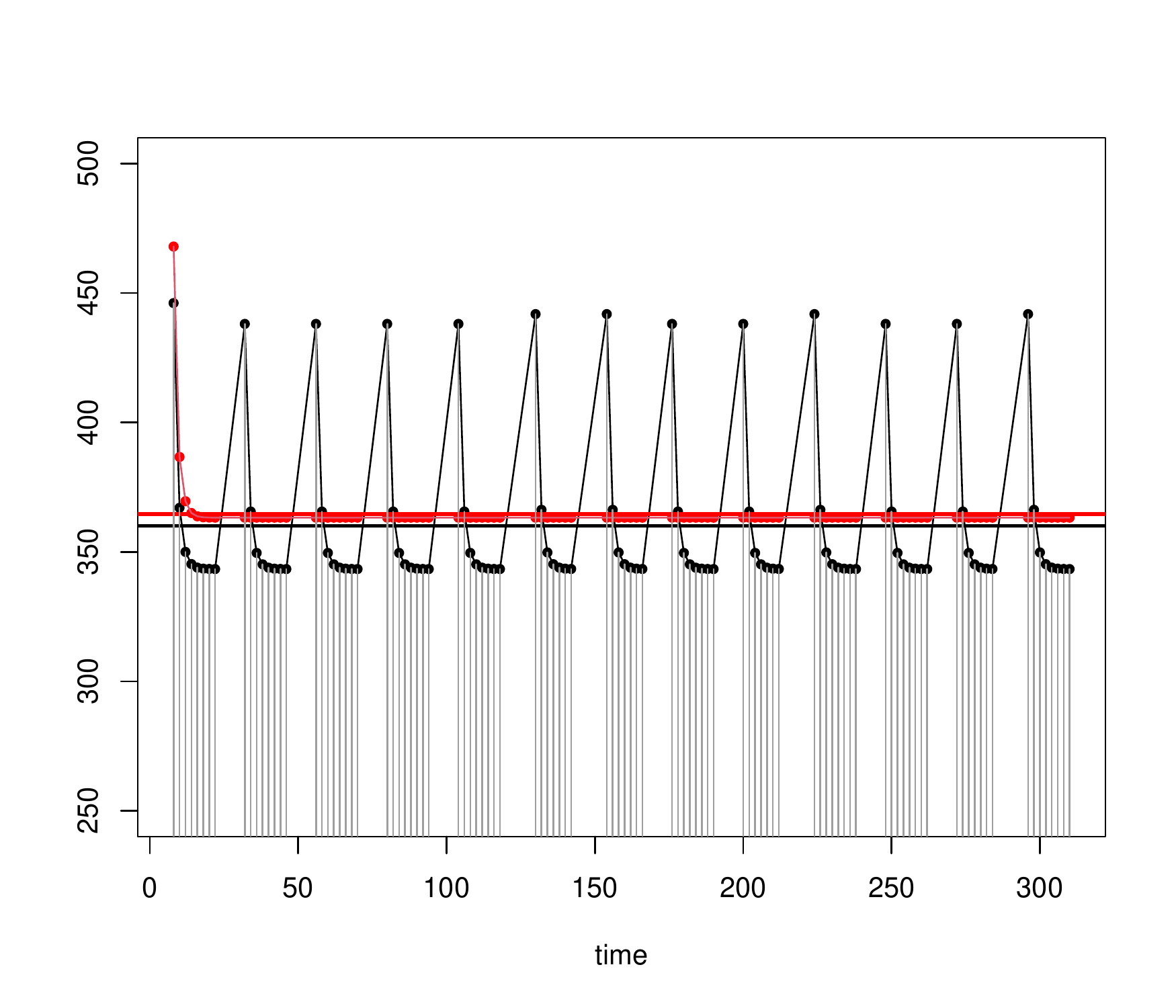}
\end{centering}
\caption{Estimated mean squared errors for the lung function data. Black line represent MSE under the IMA model while red line represent MSE under the MA model. Horizontal lines are their corresponding averages.}
\label{application-MSE}
\end{figure}

\subsection{Light curves}

In astronomy it is common to find time series measured at irregular times. These temporal data generally correspond to the brightness of an astronomical object over time. The time sequence of measurements of the brightness of an astronomical object is known as the light curve of this object. In this example, the IMA model is fitted to the light curve of a Blazar object. Blazar objects are characterized by stochastics signals in its light curves. The light curve used in this example was observed with the Zwicky Transient Facility (ZTF, \citet{Bellm_2018_1}) survey (coded as ZTF18abvfmot). The observations from the ZTF survey were processed by the ALeRCE broker \citep{ALeRCE}. The light curve has 105 observations taken over a range of approximately 480 days. Before fitting the model, we perform a transformation of the time series in order to stabilize its variance. Hereafter,  the Blazar time series will be referred to as the light curve.\\ 

The IMA model parameters $\theta$ and $\sigma^2$ estimated in the Blazar light curve by the maximum likelihood and bootstrap methods, and their respective estimated standard errors are the following,

\[
\hat{\theta}^{\textrm{MLE}}=0.815\quad\widehat{\textrm{se}}(\hat{\theta}^{\textrm{MLE}})=0.085\quad\hat{\sigma}_{\textrm{MLE}}^{2}=0.593 \quad\widehat{\textrm{se}}(\hat{\sigma}_{\textrm{MLE}}^{2})=0.084 
\]

\[
\hat{\theta}^{\textrm{b}}=0.802\quad\widehat{\textrm{se}}(\hat{\theta}^{\textrm{b}})=0.089\quad\hat{\sigma}_{\textrm{b}}^{2}=0.581\quad\widehat{\textrm{se}}(\hat{\sigma}_{\textrm{b}}^{2})=0.058
\]

Note that, the parameter $\theta$ estimated by both methods is significant greater than zero at the 5\% significance level. Consequently, the IMA model detect a significant correlation structure in the Blazar ligth curve. To assess whether the model is able to explain the entire correlation structure of the data, a residual analysis is performed. Figure \ref{fig:IMAResidualsLC} shows the residuals of the model. According to the figure, residuals are normally distributed and are uncorrelated. Therefore, the IMA model here removes the serial autocorrelation.\\

\begin{figure}
\centering 
\includegraphics[height=3in,width=5.5in]{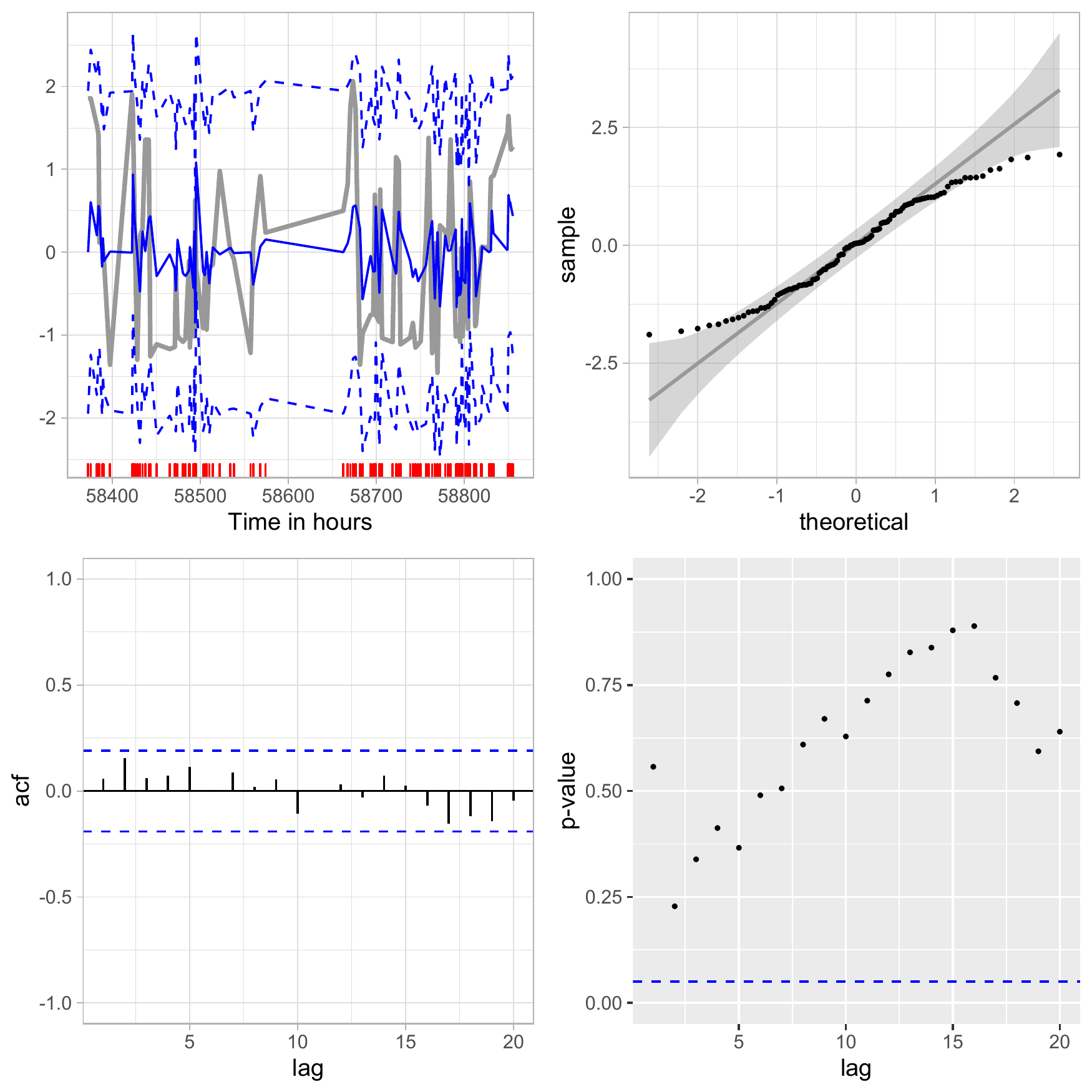} 
\caption{Analysis of the residuals of the IMA model fitted to the Blazar light curve. \label{fig:IMAResidualsLC}} 
\end{figure} 

To assess the goodness of fit of the IMA model on the Blazar light curve, we estimate its mean square errors for the one-step predictors. We compare these estimates with the mean square errors obtained for the regular MA model. Note from Figure \ref{MSE_LC1} that the average of the MSE of both models are similar. However, as in the previous example, the MSE of the IMA model is smaller of the obtained with the MA model when the time gaps are smaller, while for large gaps the MA model have a smaller value of the MSE.

\begin{figure}
\begin{centering}
\includegraphics[height=2.5in,width=4.5in]{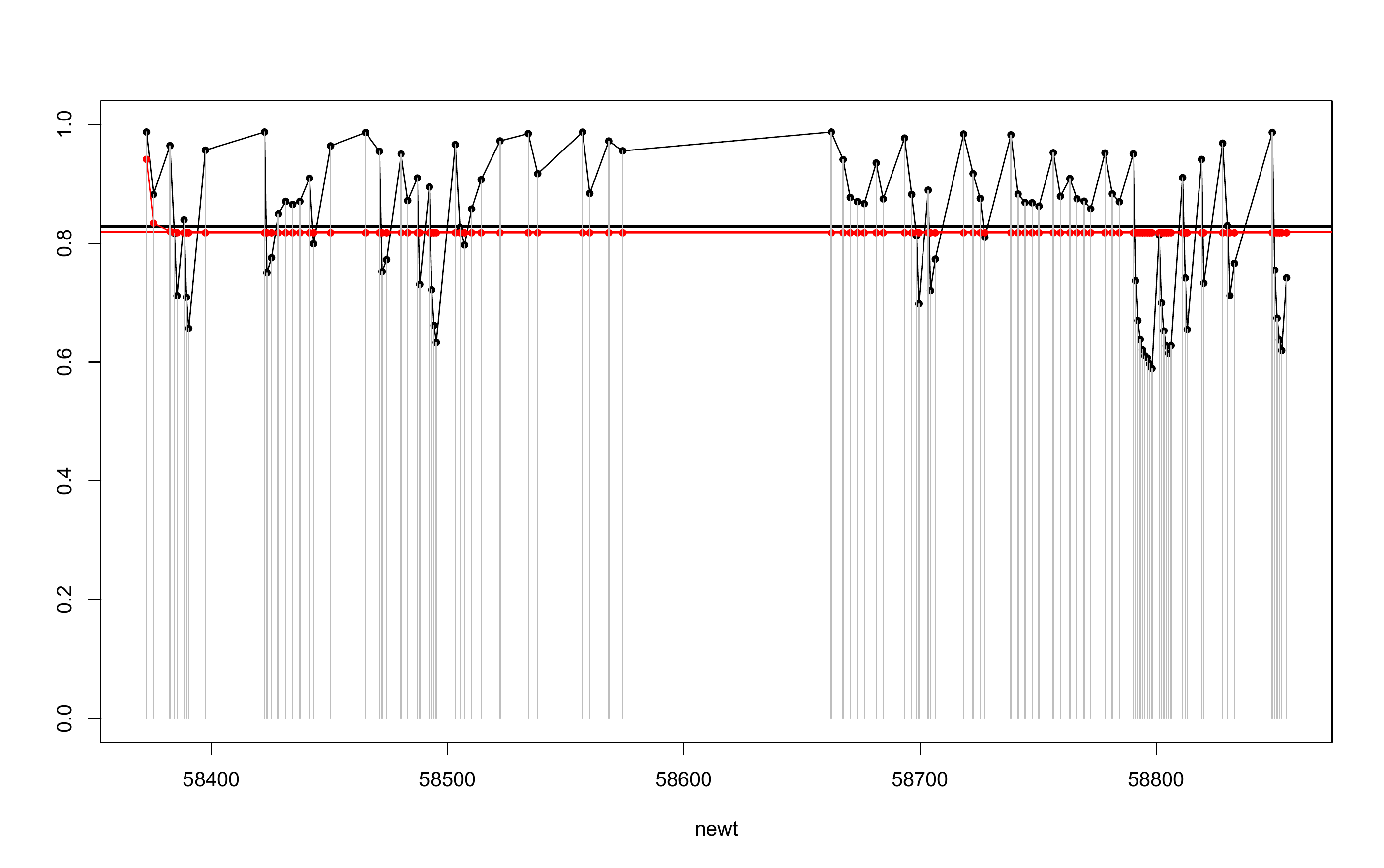}
\end{centering}
\caption{Estimated mean squared errors for the Blazar light curve. Black line represent MSE under the IMA model while red line represent MSE under the MA model. Horizontal lines are their corresponding averages.}
\label{MSE_LC1}
\end{figure}

\section{Conclusions\label{conclusions}}
This paper proposes an irregularly spaced first-order MA model that allows for the handling of first-order moving averages structures with irregularly spaced times. Its formal definition is provided and some of its statistical properties are studied.  State space representations along with one-step linear predictors are also provided. Furthermore, the finite-sample performance of the  proposed estimation methods is investigated by means of Monte Carlo simulations for both Gaussian and non-Gaussian distributions errors. The proposed methods display very good estimation performance in all the cases investigated. The performance of the proposed IMA model is also compared to the CARMA processes. Finally, the practical application of the proposed methodology is illustrated by means of two real-life data examples involving medical and astronomical time series.

\section*{Acknowledgments}
The first author was supported by CONICYTPFCHA/2015-21151457. The authors WP, SE and FE was supported from the ANID Millennium Science Initiative ICN12\_009, awarded to the Millennium Institute of Astrophysics

\appendix

\section{Constructionist viewpoint\label{sec:Constructionist-viewpoint}}

Details about defining an IMA model from the constructionist viewpoint are discussed next. The main idea is to specify the IMA process as a function of other (often simpler) stochastic processes. Let $\{\zeta_{t_{n}}\}_{n\geq1}$ be uncorrelated random variables each with mean 0 and variance 1. Now, consider\\
\begin{gather*}
X_{t_{1}}=\nu_{1}^{\nicefrac{1}{2}}\zeta_{t_{1}},\\
X_{t_{n+1}}=\nu_{n+1}^{\nicefrac{1}{2}}\zeta_{t_{n+1}}+\omega_{n}\nu_{n}^{\nicefrac{1}{2}}\zeta_{t_{n}},\quad\textrm{for}\;n\geq1,
\end{gather*}
\\
where $\{\nu_{n}\}_{n\geq1}$ and $\{\omega_{n}\}_{n\geq1}$ are time-varying
sequences that characterize the moments of the process. Thus, for
$n\geq1$, we have $\textrm{E}(X_{t_{n}})=0$,\\
\begin{gather*}
\textrm{Var}(X_{t_{1}})=\nu_{1},\;\textrm{Var}(X_{t_{n+1}})=\nu_{n+1}+\omega_{n}^{2}\nu_{n},\;\textrm{and}\\
\textrm{Cov}(X_{t_{n}},X_{t_{n+k}})=\begin{cases}
\omega_{n}\nu_{n}, & k=1,\\
0, & k\geq2.
\end{cases}
\end{gather*}

The main goal is to find $\{\nu_{n}\}_{n\geq1}$ and $\{\omega_{n}\}_{n\geq1}$
so that $\{X_{t_{n}}\}_{n\geq1}$ be a stationary process. For this,
we need that, for $n\geq1$, $\textrm{Var}(X_{t_{n+1}})=\textrm{Var}(X_{t_{1}})=\gamma_{0}$
and $\textrm{Cov}(X_{t_{n}},X_{t_{n+1}})=\gamma_{1,\Delta_{n+1}}$
with $\gamma_{1,\Delta_{n+1}}$ a function of $\Delta_{n+1}=t_{n+1}-t_{n}$.
Thus,\\
\begin{gather*}
\nu_{n+1}+\omega_{n}^{2}\nu_{n}=\nu_{1}=\gamma_{0}\;\textrm{and}\\
\omega_{n}\nu_{n}=\gamma_{1,\Delta_{n+1}}\quad\textrm{for}\;n\geq1.
\end{gather*}
\\
From these equations, we obtain
\begin{equation}
    \omega_{n}=\frac{\gamma_{1,\Delta_{n+1}}}{\nu_{n}}\quad\textrm{and}\quad\nu_{n+1}=\gamma_{0}-\frac{\gamma_{1,\Delta_{n+1}}^{2}}{\nu_{n}},\quad\textrm{with}\;\nu_{1}=\gamma_{0}.\label{eq:genbackconfrac}
\end{equation}
\\
Therefore, we can set a real-valued stationary process defining $\gamma_{0}$
and $\{\gamma_{1,\Delta_{n+1}}\}_{n\geq1}$ suitably, that is, in
such a way that $\nu_{n}>0$, for all $n$. Proposition \ref{prop1} shows that setting
\[
\gamma_{0}=\sigma^{2}(1+\theta^{2})\qquad\textrm{and}\qquad\gamma_{1,\Delta_{n+1}}=\sigma^{2}\theta^{\Delta_{n+1}}\quad\textrm{for}\;n\geq1,
\]
with $\sigma^{2}>0$ and $0\leq\theta<1$, the general backward continued fraction $\{\nu_{n}\}_{n\geq1}$ is a strictly positive sequence. Hence, $\{X_{t_{n}}\}_{n\geq1}$
is a well defined real-valued stationary stochastic process.

\begin{prop}
Consider a real-valued sequence $\{\nu_{n}\}_{n\geq1}$ as defined in \eqref{eq:genbackconfrac}. If $\gamma_{0}>0$, and for $n\geq1$, $\left(\frac{\gamma_{1,\Delta_{n+1}}}{\gamma_{0}}\right)^{2}\leq\nicefrac{1}{4}$
with $\gamma_{1,\Delta_{n+1}}\neq0$, then $\{\nu_{n}\}_{n\geq1}$
is a strictly positive sequence.\label{prop1}
\end{prop}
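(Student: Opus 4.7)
The plan is to prove the claim by induction on $n$, but with a strengthened inductive hypothesis so that the recursion step goes through cleanly. Merely assuming $\nu_n>0$ is not enough, because a positive $\nu_n$ that is arbitrarily small would make $\gamma_{1,\Delta_{n+1}}^{2}/\nu_n$ blow up and could push $\nu_{n+1}$ below zero. So instead I propose to prove the uniform lower bound
\[
\nu_n \;\geq\; \frac{\gamma_0}{2} \qquad \text{for every } n\geq 1,
\]
which is strictly stronger than $\nu_n>0$ (since $\gamma_0>0$) and is preserved by the recursion \eqref{eq:genbackconfrac}.

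The base case is immediate: $\nu_1=\gamma_0\geq\gamma_0/2$. For the inductive step, suppose $\nu_n\geq\gamma_0/2$. The hypothesis $(\gamma_{1,\Delta_{n+1}}/\gamma_0)^{2}\leq 1/4$ rewrites as $\gamma_{1,\Delta_{n+1}}^{2}\leq \gamma_0^{2}/4$. Combining these gives
\[
\frac{\gamma_{1,\Delta_{n+1}}^{2}}{\nu_n} \;\leq\; \frac{\gamma_0^{2}/4}{\gamma_0/2} \;=\; \frac{\gamma_0}{2},
\]
so from \eqref{eq:genbackconfrac},
\[
\nu_{n+1} \;=\; \gamma_0 - \frac{\gamma_{1,\Delta_{n+1}}^{2}}{\nu_n} \;\geq\; \gamma_0 - \frac{\gamma_0}{2} \;=\; \frac{\gamma_0}{2},
\]
closing the induction. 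Because $\gamma_0>0$, we conclude $\nu_n\geq\gamma_0/2>0$ for all $n\geq 1$, which is the claim.

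I do not expect any serious obstacle here: the main subtlety is simply recognizing that a naive induction on the assertion ``$\nu_n>0$'' is too weak, and that the specific constant $1/4$ in the hypothesis is exactly what is needed to make the bound $\nu_n\geq\gamma_0/2$ self-propagating. The assumption $\gamma_{1,\Delta_{n+1}}\neq 0$ plays no role in the positivity argument itself; it serves only to guarantee that each $\nu_{n+1}$ depends genuinely on the previous term (and, in the IMA application, that the innovations representation \eqref{eq:IMA} is nondegenerate). As a byproduct, the same argument also yields the upper bound $\nu_n\leq\gamma_0$ trivially from $\nu_{n+1}\leq \gamma_0$, which is consistent with the bracketing $1\leq c_n(\theta)<2$ stated earlier for the IMA specialization $\gamma_0=\sigma^{2}(1+\theta^{2})$.
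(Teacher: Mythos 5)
Your induction with the strengthened hypothesis $\nu_n\geq\gamma_0/2$ is correct: the base case holds since $\nu_1=\gamma_0$, and the step is exactly the computation you give, so the claim follows and in fact you obtain a uniform lower bound, which is stronger than bare positivity. This is, however, a genuinely different route from the paper. The paper does not argue by direct induction; it identifies $\nu_{n+1}$ as the $(n+1)$-th convergent of a general backward continued fraction, writes $\nu_{n+1}=P_{n+1}/P_n$ via the Wallis--Euler recurrence $P_{n+1}=\gamma_0 P_n-\gamma_{1,\Delta_{n+1}}^2 P_{n-1}$, and then invokes a result of El-Mikkawy identifying $P_{n+1}$ with $\det\boldsymbol{\Gamma}_{n+1}$ and characterizing positivity of all the $P_k$ by positive definiteness of the tridiagonal covariance matrix, which in turn is guaranteed by the condition $(\gamma_{1,\Delta_{n+1}}/\gamma_0)^2\leq 1/4$ through the criterion of Ismail cited earlier for \eqref{eq:ProcessConditions}. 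What the paper's route buys is the structural identity $\nu_{n+1}=\det\boldsymbol{\Gamma}_{n+1}/\det\boldsymbol{\Gamma}_{n}$, which ties the innovation variances directly to the positive definiteness of the covariance matrices underlying the distributional definition of the IMA process; what your route buys is a short, self-contained argument that needs no external results on tridiagonal determinants and yields the explicit quantitative bracketing $\gamma_0/2\leq\nu_n\leq\gamma_0$, making transparent why the constant $1/4$ is exactly the right threshold. Your side remarks are also accurate: the hypothesis $\gamma_{1,\Delta_{n+1}}\neq 0$ is not needed for positivity itself, and your bounds are consistent with (though weaker on the lower end than) the IMA-specific bound $1\leq c_n(\theta)<2$ proved separately in the paper.
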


\begin{proof}
The $(n+1)$-th convergent of the general
backward continued fraction, $\{\nu_{n}\}_{n\geq1}$, is defined as\\
\[
\nu_{n+1}=\left[\gamma_{0}+\frac{-\gamma_{1,\Delta_{2}}^{2}}{\gamma_{0}+}\frac{-\gamma_{1,\Delta_{3}}^{2}}{\gamma_{0}+}\cdots\frac{-\gamma_{1,\Delta_{n+1}}^{2}}{\gamma_{0}}\right]_{b}=\gamma_{0}+\cfrac{-\gamma_{1,\Delta_{n+1}}^{2}}{\gamma_{0}+\cfrac{-\gamma_{1,\Delta_{n}}^{2}}{\gamma_{0}+\cfrac{\ddots}{\gamma_{0}+\cfrac{-\gamma_{1,\Delta_{2}}^{2}}{\gamma_{0}}}}}.
\]

Thus, few convergent are\\
{\footnotesize{}
\begin{align*}
\nu_{2} & =\left[\gamma_{0}+\frac{-\gamma_{1,\Delta_{2}}^{2}}{\gamma_{0}+}\right]_{b}=\frac{\gamma_{0}^{2}-\gamma_{1,\Delta_{2}}^{2}}{\gamma_{0}},\\
\nu_{3} & =\left[\gamma_{0}+\frac{-\gamma_{1,\Delta_{2}}^{2}}{\gamma_{0}+}\frac{-\gamma_{1,\Delta_{3}}^{2}}{\gamma_{0}+}\right]_{b}=\frac{\gamma_{0}^{3}-\gamma_{0}\gamma_{1,\Delta_{2}}^{2}-\gamma_{0}\gamma_{1,\Delta_{3}}^{2}}{\gamma_{0}^{2}-\gamma_{1,\Delta_{2}}^{2}},\\
\nu_{4} & =\left[\gamma_{0}+\frac{-\gamma_{1,\Delta_{2}}^{2}}{\gamma_{0}+}\frac{-\gamma_{1,\Delta_{3}}^{2}}{\gamma_{0}+}\frac{-\gamma_{1,\Delta_{4}}^{2}}{\gamma_{0}+}\right]_{b}=\frac{\gamma_{0}^{4}-\gamma_{0}^{2}\gamma_{1,\Delta_{2}}^{2}-\gamma_{0}^{2}\gamma_{1,\Delta_{3}}^{2}-\gamma_{0}^{2}\gamma_{1,\Delta_{4}}^{2}+\gamma_{1,\Delta_{2}}^{2}\gamma_{1,\Delta_{4}}^{2}}{\gamma_{0}^{3}-\gamma_{0}\gamma_{1,\Delta_{2}}^{2}-\gamma_{0}\gamma_{1,\Delta_{3}}^{2}}.
\end{align*}
}{\footnotesize\par}

The ratio for the general backward continued fraction, $\{\nu_{n}\}_{n\geq1}$, is\\
\[
\nu_{1}=\frac{P_{1}}{P_{0}}\;\textrm{and}\;\nu_{n+1}=\frac{P_{n+1}}{P_{n}},
\]
\\
for $n\geq1$, where the sequence $\{P_{n}\}_{n\geq0}$ is\\
\[
P_{n+1}=\gamma_{0}P_{n}-\gamma_{1,\Delta_{n+1}}^{2}P_{n-1}
\]
\\
with $P_{1}=\gamma_{0}$ and $P_{0}=1$. This sequence is obtained
by what is known as the Wallis-Euler recurrence relations \citep{Loya2017}.
\cite{El-Mikkawy2006} proves that $P_{k}>0$, for $k=1,\ldots,n+1$,
if and only if the matrix\\
\[
\boldsymbol{\Gamma}_{n+1}=\begin{bmatrix}\gamma_{0} & \gamma_{1,\Delta_{2}} & \cdots & 0 & 0\\
\gamma_{1,\Delta_{2}} & \gamma_{0} & \cdots & 0 & 0\\
\vdots & \vdots & \ddots & \vdots & \vdots\\
0 & 0 & \cdots & \gamma_{0} & \gamma_{1,\Delta_{n+1}}\\
0 & 0 & \cdots & \gamma_{1,\Delta_{n+1}} & \gamma_{0}
\end{bmatrix}
\]
\\
is positive definite with $\boldsymbol{\Gamma}_{1}=[\gamma_{0}]$.
Further, they proved that $\det\boldsymbol{\Gamma}_{1}=P_{1}$ and
$\det\boldsymbol{\Gamma}_{n+1}=P_{n+1}$.

Thus, from \eqref{eq:ProcessConditions}, if $\gamma_{0}>0$
and, for $n\geq1$, $\left(\frac{\gamma_{1,\Delta_{n+1}}}{\gamma_{0}}\right)^{2}\leq\nicefrac{1}{4}$
with $\gamma_{1,\Delta_{n+1}}\neq0$, then $\{\nu_{n}\}_{n\geq1}$
is a strictly positive sequence.
\end{proof}

\section{Proof of $1<c_{n}(\theta)<2$ for $n\geq1$\label{sec:On-gbcf-IMA}}

Consider the general backward continued fraction of the IMA model
$$c_{n}(\theta)=1+\theta^{2}-\frac{\theta^{2\Delta_{n}}}{c_{n-1}(\theta)},\quad n\geq2,$$
with $c_{1}(\theta)=1+\theta^{2}$.

\begin{prop}
If $0\leq\theta<1$, and $\Delta_{n}\geq1$, for all $n$, then
$$1\leq c_{n}(\theta)<2\quad\textrm{for}\quad n\geq1.$$
\end{prop}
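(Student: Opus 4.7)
The plan is to prove both inequalities by induction on $n$, with the base case handled directly from the defining formula $c_1(\theta)=1+\theta^2$: since $0\le\theta<1$, this immediately gives $1\le c_1(\theta)<2$.

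For the inductive step, suppose $1\le c_{n-1}(\theta)<2$. I would handle the upper bound first, as it is essentially free: because $c_{n-1}(\theta)\ge 1>0$ and $\theta^{2\Delta_n}\ge 0$, the subtracted term $\theta^{2\Delta_n}/c_{n-1}(\theta)$ is nonnegative, so
\[
c_n(\theta)=1+\theta^2-\frac{\theta^{2\Delta_n}}{c_{n-1}(\theta)}\;\le\;1+\theta^2\;<\;2.
\]

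The lower bound is where the hypothesis $\Delta_n\ge 1$ actually gets used, and is the main obstacle. The inequality $c_n(\theta)\ge 1$ rearranges to $\theta^2\,c_{n-1}(\theta)\ge \theta^{2\Delta_n}$. For $\theta=0$ this holds trivially (both sides vanish, using $\Delta_n\ge 1$). For $0<\theta<1$ I would divide through by $\theta^2>0$ to reduce the claim to $c_{n-1}(\theta)\ge \theta^{2(\Delta_n-1)}$. Since $\Delta_n\ge 1$, the exponent $2(\Delta_n-1)$ is nonnegative, so $\theta^{2(\Delta_n-1)}\le 1$; combined with the inductive hypothesis $c_{n-1}(\theta)\ge 1$, this gives the desired bound.

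The only subtlety worth flagging is the role of $\Delta_n\ge 1$: if one allowed $\Delta_n<1$, then $\theta^{2(\Delta_n-1)}$ could exceed $1$ and the lower bound would fail, so this assumption is essential and enters exactly at the step $\theta^{2(\Delta_n-1)}\le 1$. Everything else is a straightforward induction.
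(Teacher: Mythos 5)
Your proof is correct and takes essentially the same route as the paper's: induction on $n$, with the upper bound following from nonnegativity of the subtracted term and the lower bound from the inductive hypothesis $c_{n-1}(\theta)\geq 1$ combined with $\theta^{2\Delta_{n}}\leq\theta^{2}$ (your rearranged condition $c_{n-1}(\theta)\geq\theta^{2(\Delta_{n}-1)}$ is just an algebraic restatement of that same step). If anything, your explicit handling of the $\theta=0$ case is slightly more careful than the paper's, which states strict inequalities that only hold verbatim for $\theta>0$.
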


\begin{proof}
By induction. For $n=1$, we have straightly that $1\leq c_{1}(\theta)<2$. Now, consider (for $n\geq2$) that $1\leq c_{n-1}(\theta)<2$ as the hypothesis. Then,
$$c_{n}(\theta)=1+\theta^{2}-\frac{\theta^{2\Delta_{n}}}{c_{n-1}(\theta)}<1+\theta^{2}=c_{1}(\theta)<2,$$
since ${\theta^{2\Delta_{n}}}/{c_{n-1}(\theta)}>0$. Also, since ${\theta^{2\Delta_{n}}}/{c_{n-1}(\theta)}<\theta^{2\Delta_{n}}$, then
$$c_{n}(\theta)=1+\theta^{2}-\frac{\theta^{2\Delta_{n}}}{c_{n-1}(\theta)}>1+\theta^{2}-\theta^{2\Delta_{n}}$$
but $\theta^{2\Delta_{n}}\leq\theta^{2}$ since $\Delta_{n}\geq1$. Thus,
$$c_{n}(\theta)>1+\theta^{2}-\theta^{2\Delta_{n}}\geq1$$
obtaining the desired result.
\end{proof}

\section{Innovations algorithm\label{sec:Innovations-algorithm}}

Following  \cite[Proposition 5.2.2]{Brockwell1991}, we
obtain the coefficients of $\hat{X}_{t_{n+1}}=\sum_{j=1}^{n}\theta_{nj}(X_{t_{n+1-j}}-\hat{X}_{t_{n+1-j}})$
through the next proposition.
\begin{prop}
If $\{X_{t_{n}}\}_{n\geq1}$ has zero mean and $\textrm{E}(X_{t_{i}}X_{t_{j}})=\gamma(t_{i},t_{j})$,
where the matrix $\left[\gamma(t_{i},t_{j})\right]_{i,j=1}^{n}$ is
non-singular for each $n=1,2,\ldots$, then the one-step predictors
$\hat{X}_{t_{n+1}}$, and their mean squared errors $\upsilon_{n+1}$,
$n\geq0$, are given by\\
\[
\hat{X}_{t_{n+1}}=\begin{cases}
0 & \textrm{if}\qquad n=0,\\
\sum_{j=1}^{n}\theta_{nj}\left(X_{t_{n+1-j}}-\hat{X}_{t_{n+1-j}}\right) & \textrm{if}\qquad n\geq1,
\end{cases}
\]
\\
and\\
\begin{align*}
\begin{cases}
\upsilon_{1} & =\gamma(t_{1},t_{1}),\\
\theta_{n,n-k} & =\upsilon_{k+1}^{-1}\left(\gamma(t_{n+1},t_{k+1})-\sum_{j=0}^{k-1}\theta_{k,k-j}\theta_{n,n-j}\upsilon_{j+1}\right), k=0,1,\ldots,n-1,\\
\upsilon_{n+1} & =\gamma(t_{n+1},t_{n+1})-\sum_{j=0}^{n-1}\theta_{n,n-j}^{2}\upsilon_{j+1}.
\end{cases}
\end{align*}
\end{prop}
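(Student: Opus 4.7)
The plan is to exploit the orthogonality of the innovation sequence $U_k := X_{t_k} - \hat{X}_{t_k}$ in $L^2$, which reduces the problem to writing the Gram--Schmidt-style projection in a recursive form. The starting point is to verify that for each $n \geq 1$ the variables $U_1, \ldots, U_n$ are pairwise orthogonal with $\textrm{E}(U_k^2) = \upsilon_k > 0$ and that $\mathrm{sp}\{U_1, \ldots, U_n\} = \mathrm{sp}\{X_{t_1}, \ldots, X_{t_n}\}$. Orthogonality holds because $\hat{X}_{t_k}$ is by construction the orthogonal projection of $X_{t_k}$ onto $\mathrm{sp}\{X_{t_1}, \ldots, X_{t_{k-1}}\}$, so $U_k$ is orthogonal to that subspace, and in particular to every $U_j$ with $j < k$; the span equality is an immediate induction using $U_k = X_{t_k} - \hat{X}_{t_k}$ with $\hat{X}_{t_k} \in \mathrm{sp}\{X_{t_1}, \ldots, X_{t_{k-1}}\}$, and non-singularity of the covariance matrix guarantees $\upsilon_k > 0$.

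With this basis available, I would expand $\hat{X}_{t_{n+1}}$ in the orthogonal set $\{U_1, \ldots, U_n\}$ via standard Hilbert-space projection,
\[
\hat{X}_{t_{n+1}} = \sum_{k=0}^{n-1} \frac{\textrm{E}(X_{t_{n+1}} U_{k+1})}{\upsilon_{k+1}} U_{k+1},
\]
which becomes the claimed expression $\hat{X}_{t_{n+1}} = \sum_{j=1}^n \theta_{nj}(X_{t_{n+1-j}} - \hat{X}_{t_{n+1-j}})$ after setting $\theta_{n, n-k} := \upsilon_{k+1}^{-1} \textrm{E}(X_{t_{n+1}} U_{k+1})$ and reindexing $j = n - k$. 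To put the coefficients in recursive form, I would substitute the analogous expansion $\hat{X}_{t_{k+1}} = \sum_{j=0}^{k-1} \theta_{k, k-j} U_{j+1}$ into $\textrm{E}(X_{t_{n+1}} U_{k+1}) = \gamma(t_{n+1}, t_{k+1}) - \textrm{E}(X_{t_{n+1}} \hat{X}_{t_{k+1}})$ and use the inductively available identity $\textrm{E}(X_{t_{n+1}} U_{j+1}) = \theta_{n, n-j} \upsilon_{j+1}$ to obtain the stated recursion. The mean squared error formula then follows by Pythagoras: since $X_{t_{n+1}} = \hat{X}_{t_{n+1}} + U_{n+1}$ with $\hat{X}_{t_{n+1}} \perp U_{n+1}$, one gets $\upsilon_{n+1} = \gamma(t_{n+1}, t_{n+1}) - \textrm{E}(\hat{X}_{t_{n+1}}^2) = \gamma(t_{n+1}, t_{n+1}) - \sum_{j=0}^{n-1} \theta_{n, n-j}^2 \upsilon_{j+1}$, using orthogonality of the $U_k$'s once more.

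The whole construction is really a single induction on $n$, and the main obstacle is purely bookkeeping: one must ensure that at the moment the recursion is used to compute $\theta_{n, n-k}$ and $\upsilon_{n+1}$, all of $\theta_{k, k-j}$ and $\theta_{n, n-j}$ for $j < k$, together with $\upsilon_1, \ldots, \upsilon_k$, have already been produced by earlier stages, and one must keep the two conventions $\theta_{nj}$ and $\theta_{n, n-k}$ aligned through the reindexing. No further analytic input is needed; non-singularity of $[\gamma(t_i, t_j)]$ enters only to guarantee $\upsilon_{k+1} > 0$ so that the divisions defining the coefficients are legitimate.
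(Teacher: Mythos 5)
Your proof is correct and takes essentially the same route as the source the paper relies on: the paper does not reprove this proposition but cites \citet[Proposition 5.2.2]{Brockwell1991}, whose argument is exactly the orthogonal-innovations (Gram--Schmidt) projection you describe, with $\theta_{n,n-k}=\upsilon_{k+1}^{-1}\textrm{E}\bigl(X_{t_{n+1}}(X_{t_{k+1}}-\hat{X}_{t_{k+1}})\bigr)$, the substitution of the expansion of $\hat{X}_{t_{k+1}}$ to get the recursion, and the Pythagorean identity for $\upsilon_{n+1}$. The only cosmetic point is that the identity $\textrm{E}(X_{t_{n+1}}U_{j+1})=\theta_{n,n-j}\upsilon_{j+1}$ follows directly from the definition of the coefficients (computed in increasing $k$ for fixed $n$) rather than needing a separate inductive hypothesis, and, as you note, non-singularity of the covariance matrices is what guarantees $\upsilon_{k}>0$ so the divisions are legitimate.
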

For the irregularly spaced first-order moving average process of general
form we have,\\
\begin{align*}
\theta_{n,j} & =0,\quad2\leq j\leq n,\\
\theta_{n,1} & =\upsilon_{n}^{-1}\gamma_{1,\Delta_{n+1}},\\
\upsilon_{n+1} & =\gamma_{0}-\theta_{n,1}^{2}\upsilon_{n}=\gamma_{0}-\frac{\gamma_{1,\Delta_{n+1}}^{2}}{\upsilon_{n}},\;\textrm{and}\;\upsilon_{1}=\gamma_{0}.
\end{align*}

\section{Measures of performance-regularly spaced case\label{sec:IMA-MC-RegularySpacedTimes}}

In this section, the performance of irregularly spaced case for the ML and Bootstrap estimators is compared in the context of regularly spaced times.
Tables \ref{IMA-MC-Table-RegularSpaced-MLE} and \ref{IMA-MC-Table-RegularSpaced-b} show the Monte Carlo
results. 

In addition, Table \ref{IMA-MC-Table-RegularSpaced-MLE}, also  consider the asymptotic standard error for the ML estimator defined as the square root of
\[
\textrm{se}^{2}(\hat{\theta}^{\textrm{MLE}})=\frac{1-\theta^{2}}{\textrm{N}}.
\]
From these tables, notice that the Monte Carlo results for
the regularly spaced time case exhibit a similar behavior as compared to the irregularly spaced time case. 


\begin{center}
	\begin{table}
		\caption{Monte Carlo results for the ML estimator with regularly spaced times. The MCE estimated is $0.003$.}
		\label{IMA-MC-Table-RegularSpaced-MLE}
		\begin{centering}
			\begin{tabular}{|c|c|c|c|c|c|c|c|c|}
				\hline 
				N & $\theta_{0}$ & $\hat{\theta}^{\textrm{MLE}}$ & $\widehat{\textrm{se}}(\hat{\theta}^{\textrm{MLE}})$ & $\widetilde{\textrm{se}}(\hat{\theta}^{\textrm{MLE}})$ & $\textrm{se}(\hat{\theta}^{\textrm{MLE}})$ & $\textrm{bias}_{\hat{\theta}^{\textrm{MLE}}}$ & $\textrm{RMSE}_{\hat{\theta}^{\textrm{MLE}}}$ & $\textrm{CV}_{\hat{\theta}^{\textrm{MLE}}}$\tabularnewline
				\hline 
				\multirow{3}{*}{$100$} & $0.1$ & $0.108$ & $0.103$ & $0.087$ & $0.099$ & $0.008$ & $0.103$ & $0.952$\tabularnewline
				\cline{2-8} \cline{3-8} \cline{4-8} \cline{5-8} \cline{6-8} \cline{7-8} \cline{8-8} 
				& $0.5$ & $0.502$ & $0.089$ & $0.096$ & $0.087$ & $0.002$ & $0.089$ & $0.178$\tabularnewline
				\cline{2-8} \cline{3-8} \cline{4-8} \cline{5-8} \cline{6-8} \cline{7-8} \cline{8-8} 
				& $0.9$ & $0.910$ & $0.051$ & $0.055$ & $0.044$ & $0.010$ & $0.052$ & $0.056$\tabularnewline
				\hline 
				\multirow{3}{*}{$500$} & $0.1$ & $0.097$ & $0.045$ & $0.044$ & $0.044$ & $-0.003$ & $0.045$ & $0.461$\tabularnewline
				\cline{2-8} \cline{3-8} \cline{4-8} \cline{5-8} \cline{6-8} \cline{7-8} \cline{8-8} 
				& $0.5$ & $0.499$ & $0.039$ & $0.039$ & $0.039$ & $-0.001$ & $0.039$ & $0.078$\tabularnewline
				\cline{2-8} \cline{3-8} \cline{4-8} \cline{5-8} \cline{6-8} \cline{7-8} \cline{8-8} 
				& $0.9$ & $0.901$ & $0.020$ & $0.021$ & $0.019$ & $0.001$ & $0.020$ & $0.022$\tabularnewline
				\hline 
				\multirow{3}{*}{$1500$} & $0.1$ & $0.099$ & $0.026$ & $0.026$ & $0.026$ & $-0.001$ & $0.026$ & $0.261$\tabularnewline
				\cline{2-8} \cline{3-8} \cline{4-8} \cline{5-8} \cline{6-8} \cline{7-8} \cline{8-8} 
				& $0.5$ & $0.500$ & $0.022$ & $0.022$ & $0.022$ & $0.000$ & $0.022$ & $0.045$\tabularnewline
				\cline{2-8} \cline{3-8} \cline{4-8} \cline{5-8} \cline{6-8} \cline{7-8} \cline{8-8} 
				& $0.9$ & $0.901$ & $0.011$ & $0.011$ & $0.011$ & $0.001$ & $0.011$ & $0.013$\tabularnewline
				\hline  
			\end{tabular}
		\par\end{centering}
	\end{table}
\par\end{center}

\begin{center}
	\begin{table}
		\caption{Monte Carlo results for the bootstrap estimator with regularly spaced times. The MCE estimated is $0.003$.}
		\label{IMA-MC-Table-RegularSpaced-b}
		\begin{centering}
			\begin{tabular}{|c|c|c|c|c|c|c|c|}
				\hline 
				N & $\theta_{0}$ & $\hat{\theta}^{\textrm{b}}$ & $\widehat{\textrm{se}}(\hat{\theta}^{\textrm{b}})$ & $\widetilde{\textrm{se}}(\hat{\theta}^{\textrm{b}})$ & $\textrm{bias}_{\hat{\theta}^{\textrm{b}}}$ & $\textrm{RMSE}_{\hat{\theta}^{\textrm{b}}}$ & $\textrm{CV}_{\hat{\theta}^{\textrm{b}}}$\tabularnewline
				\hline 
				\multirow{3}{*}{$100$} & $0.1$ & $0.120$ & $0.082$ & $0.073$ & $0.020$ & $0.084$ & $0.685$\tabularnewline
				\cline{2-8} \cline{3-8} \cline{4-8} \cline{5-8} \cline{6-8} \cline{7-8} \cline{8-8} 
				& $0.5$ & $0.502$ & $0.093$ & $0.099$ & $0.002$ & $0.093$ & $0.186$\tabularnewline
				\cline{2-8} \cline{3-8} \cline{4-8} \cline{5-8} \cline{6-8} \cline{7-8} \cline{8-8} 
				& $0.9$ & $0.915$ & $0.049$ & $0.050$ & $0.015$ & $0.051$ & $0.054$\tabularnewline
				\hline 
				\multirow{3}{*}{$500$} & $0.1$ & $0.098$ & $0.041$ & $0.042$ & $-0.002$ & $0.042$ & $0.422$\tabularnewline
				\cline{2-8} \cline{3-8} \cline{4-8} \cline{5-8} \cline{6-8} \cline{7-8} \cline{8-8} 
				& $0.5$ & $0.499$ & $0.039$ & $0.041$ & $-0.001$ & $0.039$ & $0.079$\tabularnewline
				\cline{2-8} \cline{3-8} \cline{4-8} \cline{5-8} \cline{6-8} \cline{7-8} \cline{8-8} 
				& $0.9$ & $0.902$ & $0.020$ & $0.021$ & $0.002$ & $0.021$ & $0.023$\tabularnewline
				\hline 
				\multirow{3}{*}{$1500$} & $0.1$ & $0.098$ & $0.026$ & $0.026$ & $-0.002$ & $0.026$ & $0.260$\tabularnewline
				\cline{2-8} \cline{3-8} \cline{4-8} \cline{5-8} \cline{6-8} \cline{7-8} \cline{8-8} 
				& $0.5$ & $0.500$ & $0.022$ & $0.023$ & $0.000$ & $0.022$ & $0.045$\tabularnewline
				\cline{2-8} \cline{3-8} \cline{4-8} \cline{5-8} \cline{6-8} \cline{7-8} \cline{8-8} 
				& $0.9$ & $0.902$ & $0.011$ & $0.012$ & $0.002$ & $0.011$ & $0.013$\tabularnewline
				\hline 
			\end{tabular}
		\par\end{centering}
	\end{table}
\par\end{center}

\section{Measures of performance in comparison with CARMA(2,1) model\label{sec:IMA-CARMA-App}}

\begin{center}
	\begin{table}
		\caption{Monte Carlo results of the experiment to compare the performance of the IMA and CARMA(2,1) models in fitting IMA model. The goodness of fit measure estimated from both models is the AIC.}
		\label{IMA-CARMA-App1}
		\begin{centering}
			\begin{tabular}{|c|c|c|c|c|c|c|c|c|}
				\hline 
				N & $\theta_{0}$ & $\hat{\theta}$ & $\widehat{\textrm{se}}(\hat{\theta})$ & AIC\_IMA & SD(AIC\_IMA) & AIC\_C21 & SD(AIC\_C21) &  \% Success  \tabularnewline
				\hline 
				\multirow{4}{*}{$300$} & 0.95  & 0.9437 & 0.0568 & 399.4029 & 14.1419 & 416.4938 & 15.3710 & 0.9930 \tabularnewline
				\cline{2-9} \cline{3-9} \cline{4-9} \cline{5-9} \cline{6-9} \cline{7-9} \cline{8-9} \cline{9-9} 
				& 0.9  & 0.8912 & 0.0534 & 407.1385 & 14.0296 & 419.1464 & 22.0460 & 0.9940 \tabularnewline
				\cline{2-9} \cline{3-9} \cline{4-9} \cline{5-9} \cline{6-9} \cline{7-9} \cline{8-9} \cline{9-9}  
				& 0.7 & 0.6762 & 0.1171 & 414.5499 & 13.9158 & 421.0512 & 19.3402 & 0.9750 \tabularnewline
								\cline{2-9} \cline{3-9} \cline{4-9} \cline{5-9} \cline{6-9} \cline{7-9} \cline{8-9} \cline{9-9} 
				& 0.5 & 0.4900 & 0.1346 & 416.5100 & 13.8214 & 421.8210 & 26.3998 & 0.9740 \tabularnewline
				\hline 
				\multirow{4}{*}{$1000$} & 0.95 & 0.9464 & 0.0473 & 1327.2653 & 43.8182 & 1383.1773 & 77.7452 & 0.9970 \tabularnewline
				\cline{2-9} \cline{3-9} \cline{4-9} \cline{5-9} \cline{6-9} \cline{7-9} \cline{8-9} \cline{9-9} 
				& 0.9  & 0.8971 & 0.0399 & 1352.7468 & 44.0957 & 1387.5857 & 79.1279 & 0.9970 \tabularnewline
				\cline{2-9} \cline{3-9} \cline{4-9} \cline{5-9} \cline{6-9} \cline{7-9} \cline{8-9} \cline{9-9} 
				 & 0.7 & 0.6924 & 0.0675 & 1379.2990 & 44.4712 & 1393.3974 & 54.6430 & 0.9910 \tabularnewline
				\cline{2-9} \cline{3-9} \cline{4-9} \cline{5-9} \cline{6-9} \cline{7-9} \cline{8-9} \cline{9-9} 
				& 0.5 & 0.4949 & 0.0737 & 1385.7829 & 44.4713 & 1395.4456 & 46.6157 & 0.9930 \tabularnewline
				\hline 
			\end{tabular}
		\par\end{centering}
	\end{table}
\par\end{center}

\begin{center}
	\begin{table}
		\caption{Monte Carlo results of the experiment to compare the performance of the IMA and CARMA(2,1) models in fitting IMA model. The goodness of fit measure estimated from both models is the -2 Log-likelihood.}
		\label{IMA-CARMA}
		\begin{centering}
			\begin{tabular}{|c|c|c|c|c|c|c|c|c|}
				\hline 
				N & $\theta_{0}$ & $\hat{\theta}$ & $\widehat{\textrm{se}}(\hat{\theta})$ & LL\_IMA & SD(LL\_IMA) & LL\_C21 & SD(LL\_C21) & \% Success \tabularnewline
				\hline 
				\multirow{4}{*}{$300$} & 0.95 & 0.9437 & 0.0568 & 397.4049 & 14.0854 & 411.4988 & 15.2356 & 0.9860 \tabularnewline
				\cline{2-9} \cline{3-9} \cline{4-9} \cline{5-9} \cline{6-9} \cline{7-9} \cline{8-9} \cline{9-9} 
				& 0.9 & 0.8912 & 0.0534 & 405.1405 & 13.9716 & 414.1514 & 21.9512 & 0.9470 \tabularnewline
				\cline{2-9} \cline{3-9} \cline{4-9} \cline{5-9} \cline{6-9} \cline{7-9} \cline{8-9} \cline{9-9}  
				& 0.7 & 0.6762 & 0.1171 & 412.5519 & 13.8562 & 416.0562 & 19.2315 & 0.8050 \tabularnewline
								\cline{2-9} \cline{3-9} \cline{4-9} \cline{5-9} \cline{6-9} \cline{7-9} \cline{8-9} \cline{9-9} 
				& 0.5 & 0.4900 & 0.1346 & 414.5120 & 13.7611 & 416.8260 & 26.3202 & 0.6730 \tabularnewline
				\hline 
				\multirow{4}{*}{$1000$} & 0.95 & 0.9464 & 0.0473 & 1325.2673 & 43.7576 & 1378.1823 & 77.6563 & 0.9970\tabularnewline
				\cline{2-9} \cline{3-9} \cline{4-9} \cline{5-9} \cline{6-9} \cline{7-9} \cline{8-9} \cline{9-9} 
				& 0.9 & 0.8971 & 0.0399 & 1350.7488 & 44.0343 & 1382.5907 & 79.0402 & 0.9970 \tabularnewline
				\cline{2-9} \cline{3-9} \cline{4-9} \cline{5-9} \cline{6-9} \cline{7-9} \cline{8-9} \cline{9-9} 
				 & 0.7 & 0.6924 & 0.0675 & 1377.3010 & 44.4091 & 1388.4024 & 54.5155 & 0.9600\tabularnewline
				\cline{2-9} \cline{3-9} \cline{4-9} \cline{5-9} \cline{6-9} \cline{7-9} \cline{8-9} \cline{9-9} 
				& 0.5 & 0.4949 & 0.0737 & 1383.7849 & 44.4089 & 1390.4506 & 46.4659 & 0.8920\tabularnewline
				\hline 
			\end{tabular}
		\par\end{centering}
	\end{table}
\par\end{center}

\begin{center}
	\begin{table}
		\caption{Monte Carlo results of the experiment to estimate the parameters of the IMA and CARMA(2,1) models in a simulated IMA process.}
		\label{IMA-CARMA-App2-2}
		\begin{centering}
			\begin{tabular}{|c|c|c|c|c|c|c|c|c|c|}
				\hline 
				N & $\theta_{0}$ & $\hat{\theta}$ & $SD(\hat{\theta})$  & $\hat{\phi_1^C}$ & $SD(\hat{\phi_1^C})$& $\hat{\phi_2^C}$ & $SD(\hat{\phi_2^C})$ & $\hat{\theta_1^C}$ & $SD(\hat{\theta_1^C})$  \tabularnewline
				\hline 
				\multirow{4}{*}{$300$}& 0.95 & 0.9437 & 0.0568 & 1.9772 & 2.4118 & 4.4998 & 2.0976 & 2.4048 & 5.2211  \tabularnewline
				\cline{2-9} \cline{3-9} \cline{4-9} \cline{5-9} \cline{6-9} \cline{7-9} \cline{8-9} \cline{9-9} \cline{10-10} 
				& 0.9 & 0.8912 & 0.0534 & 1.9880 & 2.6548 & 4.3337 & 2.2127 & 2.1737 & 4.4533 \tabularnewline
				\cline{2-9} \cline{3-9} \cline{4-9} \cline{5-9} \cline{6-9} \cline{7-9} \cline{8-9} \cline{9-9}  \cline{10-10} 
				& 0.7 & 0.6762 & 0.1171 & 1.1625 & 3.6967 & 3.2940 & 2.7077 & 5.5312 & 13.1653 \tabularnewline
								\cline{2-9} \cline{3-9} \cline{4-9} \cline{5-9} \cline{6-9} \cline{7-9} \cline{8-9} \cline{9-9} \cline{10-10} 
				& 0.5 & 0.4900 & 0.1346 & 1.6704 & 3.5028 & 2.8492 & 2.5655 & 4.1638 & 9.0002 \tabularnewline
				\hline 
				\multirow{4}{*}{$1000$} & 0.95 & 0.9464 & 0.0473 & 2.9784 & 2.2235 & 5.5765 & 1.9523 & 2.1122 & 5.9226 \tabularnewline
				\cline{2-9} \cline{3-9} \cline{4-9} \cline{5-9} \cline{6-9} \cline{7-9} \cline{8-9} \cline{9-9} \cline{10-10} 
				& 0.9  & 0.8971 & 0.0399 & 3.2768 & 2.3768 & 5.5184 & 2.1549 & 1.4643 & 3.4077  \tabularnewline
				\cline{2-9} \cline{3-9} \cline{4-9} \cline{5-9} \cline{6-9} \cline{7-9} \cline{8-9} \cline{9-9} \cline{10-10} 
				 & 0.7 & 0.6924 & 0.0675 & 3.7004 & 3.2235 & 5.1538 & 2.4396 & 1.1568 & 3.4549 \tabularnewline
				\cline{2-9} \cline{3-9} \cline{4-9} \cline{5-9} \cline{6-9} \cline{7-9} \cline{8-9} \cline{9-9} \cline{10-10} 
				& 0.5 & 0.4949 & 0.0737 & 3.5409 & 3.5898 & 4.4348 & 2.6495 & 1.4821 & 3.6772  \tabularnewline
				\hline 
			\end{tabular}
		\par\end{centering}
	\end{table}
\par\end{center}

\newpage
\bibliography{mybibfile}

\end{document}